\declaretheorem[numberwithin=subsection]{theorem}
\declaretheorem[sibling=theorem]{corollary}
\DeclareMathAlphabet{\mathscr}{T1}{calligra}{m}{n}
\theoremstyle{plain}
\newtheorem{proposition}[theorem]{Proposition}
\newtheorem{lemma}[theorem]{Lemma}
\theoremstyle{definition}
\theoremstyle{remark}
\newtheorem{remark}[theorem]{Remark}
\newcommand{\End}{\mathcal{E}nd}
\DeclareMathOperator{\tr}{tr}
\DeclareMathOperator{\ad}{ad}
\newcommand{\calC}{\mathcal{C}}
\newcommand{\calCt}{\widetilde{\mathcal{C}}}
\newcommand{\LL}{\mathcal{L}}
\newcommand{\MM}{\mathcal{M}}
\newcommand{\liesl}{\mathfrak{sl}}
\newcommand{\GL}{\operatorname{GL}}
\newcommand{\SL}{\operatorname{SL}}
\newcommand{\Id}{\operatorname{Id}}
\newcommand{\prym}{P_0}
\title{Abelianization of the $\SL_2$ Hitchin connection at level four}
\author{Thomas Baier}
\address{Thomas Baier\\ CAMGSD\\
Instituto Superior T\'ecnico\\
Av. Rovisco Pais\\
1049-001 Lisboa\\
Portugal}
\email{thomas.baier@novasbe.pt}
\author[Michele Bolognesi]{Michele Bolognesi}
\address{Michele Bolognesi\\
Universit\'e Grenoble Alpes\\ CNRS\\
IF\\
38000 Grenoble\\ France}
\email{\mbox{michele.bolognesi@univ-grenoble-alpes.fr}}
\author{Johan Martens}
\address{Johan Martens\\ School of Mathematics and Maxwell Institute\\ The University of Edinburgh\\ Peter Guthrie Tait Road\\ Edinburgh EH9 3FD\\ United Kingdom}
\email{johan.martens@ed.ac.uk}
\author{Christian \textsc{Pauly}}
\address{Christian Pauly \\ Laboratoire de Math\'ematiques J.A. Dieudonn\'e \\ UMR  7351 CNRS \\ Universit\'e C\^ote d'Azur
 \\ 06108 Nice Cedex 02, France}
\email{pauly@unice.fr}
\thanks{
MB was supported by the ANR grant ANR-20-CE40-0023. JM was supported in part by EPSRC grant EP/N029828/1.  
}
\date{\today}							
\begin{document}
\begin{abstract}
    We prove that the Hitchin connection for $\SL_2$ at level four can be understood in terms of the Mumford-Welters connections on bundles of abelian theta functions for Prym torsors of all unramified double covers, and use this to show that its monodromy is finite.  This builds on earlier works, for individual curves, of the last named author with Oxbury and Ramanan.  The key ingredients in making this work on the level of connections are equivariant conformal embeddings, and anti-invariant level-rank duality.
\end{abstract}
\maketitle

 \section{Introduction}
This paper is concerned with an application of algebraic geometry to topology.  In particular, it aims to explain a sporadic finiteness phenomenon, first observed by Masbaum \cite{masbaum:1999}, in the so-called quantum representations of mapping class groups of surfaces.

Going back to the genesis of their study in the 19th century, it has been known for a long time that theta functions satisfy a heat equation.  This was elaborated by Welters in \cite{welters:1983}, who showed one can use this idea to build a flat projective connection on the corresponding vector bundles over the moduli space of polarized abelian varieties, compatible with the action of theta groups, introduced by Mumford \cite{mumford:1966}, on the fibers of 
these bundles. 
Welters used this to generalize a result of Andreotti and Mayer on the singular locus of the theta divisor in the moduli space of abelian varieties to positive characteristic.

In \cite{hitchin:1990}, Hitchin similarly established a flat projective connection on the vector bundle $\mathbb{V}(\SL_r,k)$
of non-abelian $\SL_r$-theta functions at level $k$ over a family of smooth projective curves of genus $g \geq 2$, also called the Verlinde bundle 
associated to $\SL_r$ at level $k$. The fiber
of the bundle $\mathbb{V}(\SL_r,k)$ over a curve $\calC$ equals the space $H^0(\mathcal{M}_{\SL_r}, \mathcal{L}_{\SL_r}^k)$
of global sections of the $k$-th power of the determinant line bundle $\mathcal{L}_{\SL_r}$ over the moduli space $\mathcal{M}_{\SL_r}$ of semi-stable rank-$r$ vector bundles with trivial determinant over the curve $\calC$.  Hitchin's motivation came from Chern-Simons theory, as one approach to constructing the Chern-Simons TQFT associates a vector space to each closed oriented surface via geometric quantization of the moduli space of flat unitary connections on the trivial vector bundle over the surface.  This requires the choice of polarization, in this case given by a K\"ahler structure on the moduli space, induced by the choice of a complex structure on the surface.  The Hitchin connection now indicates how the vector space depends on this choice.

Like the Mumford-Welters connection, the Hitchin connection is induced by a \emph{heat operator}, which associates a second-order differential operator on the line bundles over the moduli space of vector bundles over a smooth curve (where both are allowed to vary) to each vector field on the base of a family of smooth projective curves.  Such a heat operator is uniquely determined by its symbol, and Hitchin showed that the dual of the quadratic part of the Hitchin system indeed arises as such a symbol.  

Though the construction in \cite{hitchin:1990} was described in terms of geometric quantization, the Hitchin connection can equally be described purely in terms of algebraic geometry \cite{BBMP:2020}, and also corresponds to the connection constructed out of the representation theory of the Virasoro algebra and an affine Lie algebra, arising from the Wess-Zumino-Witten model in conformal field theory \cite{TUY:1989,tsuchimoto:1993,looijenga:2013}.

Through Mumford's work on the theta group, it can be seen that the monodromy for the Mumford-Welters connection is always finite.  It was also believed that this would be true for the Hitchin connection.  However, in \cite{masbaum:1999} Masbaum studied the quantum representations of the mapping class group for $\SL_2$ arising through skein theory using the Kauffman bracket, and showed that there exists an element in the mapping class group (a product of Dehn twists) whose images have infinite order for all levels apart from levels $1,2,4$ and $8$.  This followed on work by Funar \cite{funar:1999}, who showed that for $\SL_2$ the monodromy group is infinite (again, apart from some sporadic low levels). The construction of Masbaum did not work in the low levels $1, 2, 4, 8$, but later J\o rgensen \cite{egsgaard.jorgensen:2016} also constructed elements of infinite order for level $8$.  Note that Andersen and Ueno showed that these quantum representations are isomorphic to those arising as the monodromy of the Hitchin connection \cite{andersen.ueno:2015}.  (While none of the individual quantum representations are faithful, as Dehn twists get sent to finite order elements, when considering all levels for a fixed group, they do become asymptotically faithful \cite{andersen:2006, freedman.walker.wang:2002}.)

In light of the Grothendieck-Katz conjecture \cite{katz:1972}, which states that the monodromy of a connection that is defined in suitably generality has to be finite if its $p$-curvature vanishes for almost all primes $p$, this result came as a surprise to algebraic geometers. Masbaum's results were confirmed directly in terms of the WZW connection by Laszlo, Sorger, and the last-named author in \cite{LPS:2013}.

In this paper, we will now focus on the sporadic behavior at low levels.  In this context, we recall that level-rank (or strange) duality gives an isomorphism, for families of genus $g$ curves, between the bundle of non-abelian theta functions for $\SL_r$ at level $k$, and the dual of the bundle of theta functions for $\GL_k$ (with determinant fixed at $k(g-1)$),  at level $r$ \cite{marian.oprea:2007,belkale:2008}.  It was shown, moreover, by Belkale that this level-rank duality is flat with respect to the natural flat projective connections on both sides of this isomorphism \cite{belkale:2009}. In particular, when looking at the Hitchin connection for $\SL_2$ at level one, this says it is isomorphic to the dual of the Mumford-Welters connections for the Jacobian varieties at level two, and, like all connections in the abelian case, these have finite monodromy.

In \cite[\S 6.3]{atiyah:1990}, Atiyah conjectured that the Hitchin connections for a family of curves (for any group, and at any level) can be understood in terms of the Mumford-Welters connections for the Prym varieties, fibers of the Hitchin system, of all the smooth spectral covers of the curves.  The status of this general abelianization conjecture at present is unclear \cite{yoshida:2006,teleman:2007}.

Here we will now focus on the particular case of the Hitchin connection for $\SL_2$ at level four, and show that an isomorphism with certain spaces of abelian theta functions, given for individual curves by Oxbury and the last-named author \cite{oxbury.pauly:1996} and further elaborated with Ramanan \cite{pauly.ramanan:2001}, also respects the natural connections on both sides when letting the curves move in families.  This shares some similarities with Atiyah's proposal, but is also distinctly different.  Like Atiyah's abelianization conjecture, this involves covering curves, but unlike the spectral curves (which are always ramified) arising through the Hitchin system, the abelianization we consider involves all (a finite number) unramified covers.

What allows us to understand that the Oxbury-Pauly isomorphism respects the connection is a combination of the fact that the embedding $\liesl_2\hookrightarrow \liesl_3$, given by the adjoint representation, is \emph{conformal}, combined with the flatness of the anti-invariant version of level-rank (or strange) duality, recently developed by the authors in \cite{BBMP:2024}.  In fact, the present work was the motivation for this study of anti-invariant level-rank duality. 

The picture for abelianizations of the Hitchin connection that emerges from this is somewhat different from Atiyah's general conjecture.  In particular, our abelianization is very much sporadic, requiring particular numerical coincidences, not present for general levels. Absent these, the connection exhibits much more complicated behavior than in the abelian case.

We remark that the case of finiteness of the monodromy for the Hitchin connection for $\SL_2$ at level two is still unsettled.  It would also be interesting to compute the $p$-curvature of the Hitchin connection to see if it confirms the predictions of the Grothendieck-Katz conjecture in this setting.

Finally, it is perhaps of interest to mention some relevance of the sporadic low-level behavior.  In particular, it was shown by Freedman, Larsen and Wang in \cite{freedman.larsen.wang:2002} that the topological modular functor coming from the $SU(2)$ Chern-Simons-Witten theory is universal for quantum computation.  In particular, they worked with the fifth root of unity (which corresponds to level three in the geometric approach, the context in which the Hitchin connection arose), avoiding the sporadic behavior that we establish here.

\subsection{Statement of the main results}

Let $\mathcal{M}_g$ denote the moduli stack of smooth complex projective curves of genus $g \geq 2$, and 
$\mathcal{R}_g$ the moduli stack of connected étale double covers $\calCt \rightarrow \calC$ with 
involution $\sigma$. There is a natural forgetful map $\pi_g : \mathcal{R}_g \rightarrow \mathcal{M}_g$.
Over $\mathcal{M}_g$ we consider the Verlinde bundle $\mathbb{V}(\SL_2, 4)$ associated to $\SL_2$ at level $4$, whose fiber over $\calC$ equals the
space of non-abelian theta functions $H^0(\mathcal{M}_{\SL_2}, \mathcal{L}_{\SL_2}^4)$. We consider $\pi_J : J^* \rightarrow \mathcal{M}_g$ the
universal Jacobian torsor parameterizing degree $g-1$ line bundles over smooth genus $g$ curves $\calC$, and denote by  
$\Theta \subset J^*$ the relative canonical theta divisor. We also consider $\pi_P: P^{\rm even} \rightarrow \mathcal{R}_g$ 
the universal
Prym torsor parameterizing line bundles $L$ on $\calCt$ that satisfy $\sigma^* L \cong L^{\vee} \otimes K_{\calCt}$ and such 
that $H^0(\calCt,L)$ is even-dimensional. On $P^{\rm even}$ we denote the relative canonical theta divisor by $\Xi$.

We recall \cite{oxbury.pauly:1996,pauly.ramanan:2001} that for any curve $\calC$ 
there is a canonical isomorphism
\begin{equation} \label{isoC}
 \begin{tikzcd}H^0(J^*_{\calC}, \Theta^3)_+^\vee \oplus 
\bigoplus\limits_{\calCt \rightarrow \calC} H^0(P^{\rm even}_{\calCt/\calC}, \Xi^3)_+^\vee \ar[r,"\sim"] & 
H^0(\mathcal{M}_{\SL_2}, \mathcal{L}_{\SL_2}^4), \end{tikzcd}
\end{equation}
where the last sum is taken over all connected double étale covers $\calCt \rightarrow \calC$ and the subscript $+$
denotes the invariant subspace for the standard involutions on the Jacobian and Prym torsors.

\bigskip

Our main result asserts that the isomorphisms \eqref{isoC} for a general curve $\calC$ globalize to a projectively flat vector bundle isomorphism over $\mathcal{M}_g$.

\begin{restatable*}{theorem}{mainresulta}
\label{mainthm}
Over $\mathcal{M}_g$ we have a projectively flat isomorphism between the two vector bundles
\begin{equation*}\begin{tikzcd}[ampersand replacement=\&] ({\pi_J}_* \Theta^3)^\vee_+ \oplus {\pi_g}_*({\pi_P}_* \Xi^3)_+^\vee  \ar[r,"\sim"]\& \mathbb{V}(\SL_2,4)\end{tikzcd} \end{equation*}
equipped respectively with the 
projective  Mumford-Welters and Hitchin connections. 
\end{restatable*}

In order to establish this result, we also have to check that the projective Mumford-Welters connections add up to a single
flat projective connection on the direct sum ${\pi_J}_* \Theta^3 \oplus {\pi_g}_*({\pi_{P}}_* \Xi^3)$
over $\mathcal{M}_g$ and that this projective connection has finite monodromy (Corollary \ref{combined-con}). As an immediate 
consequence of this, we obtain

\begin{restatable*}{corollary}{mainresultb}\label{maincor}
The $\SL_2$ Hitchin connection at level four has finite monodromy.
\end{restatable*}

\subsection{Organization}
The rest of the paper is organized as follows: in Section \ref{connections} we recall the necessary general background about twisted and projective connections.  In Section \ref{dets} we describe the determinants and ranks of bundles of theta functions that will allow us to switch from projective to twisted connections on bundles on theta functions later on.  Section \ref{flatness} explains how the two main ingredients, (twisted) conformal embeddings and anti-invariant level-rank duality, come into play in this context.  Finally, in Section \ref{finiteness}, we further elaborate on theta structures and put everything together to establish Theorem \ref{mainthm} and Corollary \ref{maincor}.

\subsection{Acknowledgments} The authors would like to thank J\o rgen Ellegaard Andersen, 
Prakash Belkale, Sebastian Casalaina-Martin, Chiara Damiolini, Sorin Dumitrescu, Gregor Masbaum, Swarnava Muk\-ho\-padh\-yay, Zakaria Ouaras, Karim Rega, Graeme Segal, 
Richard Wentworth 
and Hacen Zelaci for useful conversations and remarks at various stages of this work.  

\section{Projective versus twisted connections}
\label{connections}
\subsection{Connections} We collect here the basic results on connections that we will be using throughout the paper.  The basics of this material are well-known (originally going back to \cite{beilinson.kazhdan:1991}), but occur in the literature in many different variants, and we need to introduce a few other elementary corollaries.  We also want to distinguish (in our terminology) projective and twisted connections more than is often done.  We will broadly follow the approach of Looijenga \cite[\S 1]{looijenga:2013} --- see also \cite{beilinson.bernstein:1993,tsuchimoto:1993}.

Let $E$ be a locally free sheaf on a smooth algebraic variety $X$ with tangent sheaf $T_X$.  
We can associate with this the \emph{Atiyah algebroid} $\mathcal{A}(E)$ of $E$, which is the sheaf of first order differential operators with diagonal symbol, i.e. the middle term in the top short exact sequence (known as the \emph{Atiyah sequence}) of the commutative diagram
$$\begin{tikzcd}
    0\ar[r] & \mathcal{E}nd(E)\ar[r]\ar[d, equal] & \mathcal{A}(E)\ar[r]\ar[d, hook] & T_X\ar[r]\ar[d,hook,"-\otimes \operatorname{Id}_E"] & 0 \\
    0\ar[r] & \mathcal{E}nd(E)\ar[r] & \mathcal{D}^{(1)}(E)\ar[r,"\sigma"] & T_X\otimes \mathcal{E}nd(E)\ar[r] &0,
\end{tikzcd}$$ where $\mathcal{D}^{(1)}(E)$ is the sheaf of first order differential operators on $E$, and $\sigma$ is the symbol map.
A \emph{connection} $\nabla$ on $E$ is a splitting of the Atiyah sequence
$$\begin{tikzcd} 0\ar[r] & \mathcal{E}nd(E)\ar[r] & \mathcal{A}(E)\ar[r] & T_X\ar[r] \ar[l,dashed, bend left=30, "\nabla"] & 0. \end{tikzcd}$$ 
A \emph{projective connection} on $E$ is a splitting of the push-out of the Atiyah sequence by $\mathcal{E}nd(E)\rightarrow \mathcal{E}nd(E)\big/\mathcal{O}_X$:
$$\begin{tikzcd} 0\ar[r] & \mathcal{E}nd(E)\big/\mathcal{O}_X\ar[r] & \mathcal{A}(E)\big/\mathcal{O}_X\ar[r] & T_X\ar[r] \ar[l,dashed, bend left=30, "\nabla"] & 0. \end{tikzcd}$$  
(Projective) connections are said to be \emph{flat} (or \emph{integrable}) if $\nabla$ preserves the Lie brackets on $T_X$ and $\mathcal{A}(E)$ (or $\mathcal{A}(E)/\mathcal{O}_X)$.

If $\lambda$ is a line bundle on $X$, a \emph{$\lambda$-twisted connection} on $E$ is a morphism between the Atiyah sequences of $\lambda$ and $E$, i.e. a commutative diagram:
\begin{equation}\begin{tikzcd}\label{twistedcon}
    0\ar[r] & \mathcal{E}nd(\lambda)\cong\mathcal{O}_X\ar[r]\ar[d,".w"] & \mathcal{A}(\lambda)\ar[r]\ar[d,"\nabla"] & T_X\ar[r]\ar[d,equal] & 0 \\
    0\ar[r] & \mathcal{E}nd(E)\ar[r] & \mathcal{A}(E)\ar[r] & T_X\ar[r] &0,
\end{tikzcd}\end{equation} such that $\mathcal{E}nd(\lambda)\cong\mathcal{O}_X$ is mapped to homotheties $\mathcal{O}_X\overset{.\Id_E}{\longhookrightarrow}\End(E)$.  This implies that the left vertical morphism is given by multiplication with some regular function $w$, which is referred to as the \emph{weight} of $\nabla$.

A \emph{$\lambda$-flat} connection is a $\lambda$-twisted connection such that $\nabla$ preserves the Lie brackets on $\mathcal{A}(\lambda)$ and $\mathcal{A}(E)$.
The weight $w=\nabla_1$ of a $\lambda$-flat connection is annihilated by any vector field, and hence is locally constant.  Indeed, for any local vector field $D$ that lifts to a local section $\widehat{D}$ of $\mathcal{A}(\lambda)$, we have $D(w)\Id_E=[\nabla_{\widehat{D}},\nabla_1]=\nabla_{[\widehat{D},1]}=\nabla_0=0$.

Every $\lambda$-twisted (or $\lambda$-flat) connection on $E$ canonically induces a (flat) projective connection on $E$.  
In fact, every projective connection arises in this way.  Indeed we have 
\begin{lemma} \label{projistwisted}
If the rank of $E$ is $r$, and $E$ is equipped with a projective connection $\nabla$, this lifts to a $\det(E)$-twisted connection of weight $\frac{1}{r}$.
\end{lemma}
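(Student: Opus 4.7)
The plan is to exploit the natural trace morphism of Atiyah algebroids $\tr\colon \mathcal{A}(E) \to \mathcal{A}(\det E)$ and combine it with the given projective connection to build the twisted lift explicitly.

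First, I would spell out this trace morphism. A first-order differential operator $D$ on $E$ with scalar symbol $\sigma(D) = \tau\cdot \Id_E$ induces canonically a first-order differential operator on $\det E$ with symbol $\tau$: locally, writing $D = \tau\cdot \Id_E + M$ with $M \in \mathcal{E}nd(E)$, its image acts on $\det E$ as $\tau + \tr(M)$.  This yields a commutative diagram of Atiyah sequences
$$\begin{tikzcd}
0\ar[r] & \mathcal{E}nd(E)\ar[r]\ar[d,"\tr"] & \mathcal{A}(E)\ar[r,"\sigma"]\ar[d,"\tr"] & T_X\ar[r]\ar[d,equal] & 0 \\
0\ar[r] & \mathcal{O}_X\ar[r] & \mathcal{A}(\det E)\ar[r,"\sigma"] & T_X\ar[r] & 0
\end{tikzcd}$$
whose left vertical is the usual trace and right vertical is the identity on $T_X$.

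Second, writing $\bar\nabla\colon T_X \to \mathcal{A}(E)/\mathcal{O}_X$ for the given projective connection, I would define $\nabla\colon \mathcal{A}(\det E) \to \mathcal{A}(E)$ locally as follows.  For $D' \in \mathcal{A}(\det E)$ with symbol $\tau := \sigma(D')$, pick any lift $\tilde D \in \mathcal{A}(E)$ of $\bar\nabla(\tau) \in \mathcal{A}(E)/\mathcal{O}_X\cdot\Id_E$; the element $\tr(\tilde D) - D' \in \mathcal{A}(\det E)$ has symbol zero, so it lies in $\mathcal{O}_X$, and I set
$$\nabla(D') := \tilde D - \tfrac{1}{r}\bigl(\tr(\tilde D) - D'\bigr)\cdot \Id_E.$$
The formula is independent of the chosen lift, since replacing $\tilde D$ by $\tilde D + g\cdot\Id_E$ adds $g\cdot\Id_E$ to the first term but also changes the correction by $-\tfrac{1}{r}(rg)\Id_E = -g\cdot\Id_E$, and it is manifestly $\mathcal{O}_X$-linear.

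Finally I would check the two conditions from diagram \eqref{twistedcon}: on symbols, $\sigma(\nabla(D')) = \sigma(\tilde D) = \tau\cdot\Id_E$ matches $\sigma(D') = \tau$ under the diagonal embedding $T_X \hookrightarrow T_X \otimes \mathcal{E}nd(E)$; and plugging in $D' = f \in \mathcal{O}_X \subset \mathcal{A}(\det E)$ with the trivial lift $\tilde D = 0$ yields $\nabla(f) = \tfrac{f}{r}\Id_E$, confirming that the left vertical arrow is multiplication by the weight $\tfrac{1}{r}$.  The conceptual content of the argument is really just recognizing the natural trace morphism between Atiyah algebroids; once that is in place the formula above is forced, and the only standing assumption is invertibility of $r$ (automatic in characteristic zero).
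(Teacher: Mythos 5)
Your proof is correct and is essentially the paper's argument in explicit form: both rest on the natural ``trace'' morphism $\mathcal{A}(E)\to\mathcal{A}(\det E)$ (acting on $s_1\wedge\dots\wedge s_r$ by the Leibniz rule) and on the observation that it multiplies the central $\mathcal{O}_X$ by $r$. The paper phrases this as an isomorphism from the preimage $\widehat{\Theta}$ of $\nabla(T_X)$ onto $\mathcal{A}(\det E)$, and your formula $\nabla(D')=\tilde D-\tfrac{1}{r}\bigl(\tr(\tilde D)-D'\bigr)\Id_E$ is exactly the inverse of that isomorphism composed with the inclusion $\widehat{\Theta}\hookrightarrow\mathcal{A}(E)$.
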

\begin{proof}Let $\widehat{\Theta}\subset\mathcal{A}(E)$ be the pre-image of $\nabla(T_X)\subset \mathcal{A}(E)/\mathcal{O}_X$ under the projection $\mathcal{A}(E)\rightarrow\mathcal{A}(E)/\mathcal{O}_X$.  It suffices to remark that $\widehat{\Theta}$ is isomorphic to $\mathcal{A}(\det(E))$, by letting sections $\widehat{D}$ of $\widehat{\Theta}$ act on $s_1\wedge\dots\wedge s_r$ by
$\sum_{i=1}^r s_1\wedge\dots\wedge \widehat{D}(s_i)\wedge \dots \wedge s_r,$ and that here $\mathcal{O}_X\subset \widehat{\Theta}$ gets sent to $\mathcal{O}_X\subset \mathcal{A}(\det(E))$ through multiplication by $r$.
\end{proof}

Even when one is interested mainly in projective connections, the extra information of a $\lambda$-twisted connection can be helpful to control the projective ambiguity, similarly to the case of representations of central extensions of a group, compared to projective representations.  As an example, we have
\begin{lemma}\label{sum} If two vector bundles $E$ and $F$ are equipped with $\lambda$-twisted/flat connections that have the same weight $w$, then there exists a canonical  $\lambda$-twisted/flat connections with the same weight on their direct sum $E\oplus F$.
\end{lemma}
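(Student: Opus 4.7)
The plan is to build the desired connection on $E\oplus F$ as the fibre product of the two given connections, followed by the canonical inclusion into the Atiyah algebroid of the direct sum. Since both $\nabla^E:\mathcal{A}(\lambda)\to\mathcal{A}(E)$ and $\nabla^F:\mathcal{A}(\lambda)\to\mathcal{A}(F)$ are morphisms of extensions of $T_X$ covering the identity, the universal property of the fibre product gives a canonical morphism
$$\nabla^E\times_{T_X}\nabla^F\,\colon\, \mathcal{A}(\lambda) \longrightarrow \mathcal{A}(E)\times_{T_X}\mathcal{A}(F).$$
There is a natural block-diagonal inclusion
$$\mathcal{A}(E)\times_{T_X}\mathcal{A}(F)\hookrightarrow \mathcal{A}(E\oplus F)$$
sending a pair $(D_E,D_F)$ of first-order operators with common symbol to the operator acting componentwise on sections of $E\oplus F$; its restriction to endomorphism parts is the standard inclusion $\mathcal{E}nd(E)\oplus\mathcal{E}nd(F)\hookrightarrow\mathcal{E}nd(E\oplus F)$. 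Composing produces the candidate morphism $\nabla\colon\mathcal{A}(\lambda)\to\mathcal{A}(E\oplus F)$.

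For the weight verification, the hypothesis that $\nabla^E$ and $\nabla^F$ both have weight $w$ means the left vertical maps in \eqref{twistedcon} send $1\in\mathcal{O}_X\cong\mathcal{E}nd(\lambda)$ to $w\cdot\Id_E$ and $w\cdot\Id_F$, respectively. Under the block-diagonal inclusion of endomorphisms, $(w\Id_E,w\Id_F)$ is precisely $w\cdot\Id_{E\oplus F}$, so $\nabla$ lands in homotheties with the correct scalar and therefore defines a $\lambda$-twisted connection of weight $w$. In the $\lambda$-flat case, one checks that both operations used preserve Lie brackets: the bracket on $\mathcal{A}(E)\times_{T_X}\mathcal{A}(F)$ is the componentwise one, and on the block-diagonal image in $\mathcal{A}(E\oplus F)$ it agrees with the bracket inherited from $\mathcal{A}(E\oplus F)$ by a direct computation on local sections. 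Hence $\nabla$ is $\lambda$-flat whenever $\nabla^E$ and $\nabla^F$ are.

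I do not anticipate any genuine obstacle here; the whole argument is forced by functoriality of the Atiyah algebroid, and the canonicity asserted in the statement follows automatically from the universal property of the fibre product. The only point requiring a little care is the identification of $\mathcal{A}(E)\times_{T_X}\mathcal{A}(F)$ with its block-diagonal image inside $\mathcal{A}(E\oplus F)$, and the verification that this identification matches homotheties of weight $w$ on each summand with the homothety of weight $w$ on the direct sum — which is precisely why the equality of weights is essential for the construction to descend through to a well-defined twisted connection in the first place.
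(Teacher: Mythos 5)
Your proof is correct, and it supplies exactly the canonical construction the paper has in mind: the paper states Lemma \ref{sum} without proof, treating it as elementary, and your route via the fibre product $\mathcal{A}(E)\times_{T_X}\mathcal{A}(F)$ followed by the block-diagonal inclusion into $\mathcal{A}(E\oplus F)$ is the argument implicitly relied upon later (e.g.\ in the proofs of Lemma \ref{descend} and Proposition \ref{projconndirectsum}). You also correctly isolate the one point of substance, namely that equality of the weights is what makes $(w\,\Id_E, w\,\Id_F)$ a homothety of $E\oplus F$.
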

We also have
\begin{lemma}\label{descend} If $\pi:X\rightarrow Y$ is a finite \'etale morphism, $\lambda$ is a line bundle on $Y$, and $E$ is a vector bundle on $X$ equipped with a $\pi^*\lambda$-flat connection of weight $\pi^*w$, then this connection descends to a $\lambda$-flat connection on $\pi_*E$ with weight $w$.
\end{lemma}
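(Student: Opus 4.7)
The plan is to push the $\pi^*\lambda$-flat connection forward along $\pi$ and identify the result with a $\lambda$-flat connection on $\pi_* E$, using that for $\pi$ étale one has $T_X = \pi^* T_Y$ and consequently $\mathcal{A}(\pi^*\lambda) \cong \pi^* \mathcal{A}(\lambda)$.

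First I would apply $\pi_*$ (exact since $\pi$ is finite) to the diagram \eqref{twistedcon} defining the $\pi^*\lambda$-flat connection on $E$, and precompose with the adjunction morphisms $\mathcal{O}_Y \hookrightarrow \pi_* \mathcal{O}_X$ and $\mathcal{A}(\lambda) \to \pi_* \pi^* \mathcal{A}(\lambda) = \pi_* \mathcal{A}(\pi^*\lambda)$. This yields a morphism from the Atiyah sequence of $\lambda$ on $Y$ into the pushed-forward sequence $0 \to \pi_* \mathcal{E}nd(E) \to \pi_* \mathcal{A}(E) \to T_Y \to 0$.

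Next I would upgrade this to a map into the Atiyah sequence of $\pi_* E$ itself. The key observation is that a first-order differential operator $D$ on $E$ over $X$ whose symbol equals $\pi^*\xi \otimes \Id_E$ for some local $\xi \in T_Y$ defines, via $\pi_*$, a first-order differential operator on $\pi_* E$ over $Y$ whose symbol is $\xi \otimes \Id_{\pi_* E}$: for any $f \in \mathcal{O}_Y$ the commutator $[\pi_* D, f]$ acts as $\pi^*(\xi f) \cdot \Id$, which is precisely multiplication by $\xi f$ with respect to the $\mathcal{O}_Y$-module structure on $\pi_* E$. Similarly $\pi_* \mathcal{E}nd(E)$ maps canonically to $\mathcal{E}nd(\pi_* E)$. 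Composing with the morphism from the previous step produces the desired $\lambda$-twisted connection $\mathcal{A}(\lambda) \to \mathcal{A}(\pi_* E)$.

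Finally I would verify the two remaining claims. The weight is $w$, since $1 \in \mathcal{O}_X \subset \mathcal{A}(\pi^*\lambda)$ is sent by hypothesis to $\pi^* w \cdot \Id_E$, and this pushes forward to multiplication by $w$ on $\pi_* E$ viewed as an $\mathcal{O}_Y$-module endomorphism. Flatness follows because $\pi_*$ respects the commutator bracket on first-order differential operators, and because the adjunction map $\mathcal{A}(\lambda) \to \pi_* \mathcal{A}(\pi^*\lambda)$ is a morphism of Lie algebroids when $\pi$ is étale. The main technical obstacle I expect is the bookkeeping in the second step: checking cleanly that $\pi_*$ carries the subsheaf $\mathcal{A}(E) \subset \mathcal{D}^{(1)}(E)$ of operators with diagonal symbol into the subsheaf $\mathcal{A}(\pi_* E) \subset \mathcal{D}^{(1)}(\pi_* E)$ on the nose, so that the resulting diagram is genuinely a morphism of Atiyah sequences rather than only of first-order differential-operator sequences.
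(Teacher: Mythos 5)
Your proof is correct, but it takes a genuinely different route from the paper's. The paper first treats the Galois case: writing $\Gamma$ for the Galois group, it equips each $\gamma^*E$ with the induced $\pi^*\lambda$-flat connection, sums these over $\Gamma$ via Lemma \ref{sum} to obtain a connection on $\bigoplus_{\gamma}\gamma^*E=\pi^*(\pi_*E)$, and then takes $\Gamma$-invariants of the pushforward using the identity $(\pi_*\mathcal{A}(\pi^*F))^{\Gamma}=\mathcal{A}(F)$; the general finite \'etale case is reduced to this by passing to a Galois closure $Z\rightarrow X\rightarrow Y$ and projecting the resulting connection onto the direct summand $\pi_*E$ of $\pi_*(E\otimes\varphi_*\mathcal{O}_Z)$. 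You instead push the twisted connection forward directly, precomposing with the \'etale adjunction $\mathcal{A}(\lambda)\rightarrow\pi_*\mathcal{A}(\pi^*\lambda)$ and checking that an operator on $E$ with symbol $\pi^*\xi\otimes\operatorname{Id}_E$ pushes forward to an operator on $\pi_*E$ with symbol $\xi\otimes\operatorname{Id}_{\pi_*E}$; this handles all finite \'etale $\pi$ at once, with no Galois closure and no appeal to Lemma \ref{sum}, at the price of the symbol bookkeeping you flag. One small correction to your last paragraph: it is not true that $\pi_*$ carries all of $\mathcal{A}(E)$ into $\mathcal{A}(\pi_*E)$ --- a section of $\pi_*\mathcal{A}(E)$ whose symbol lies in $\pi_*T_X=T_Y\otimes\pi_*\mathcal{O}_X$ but not in $T_Y$ pushes forward to an operator on $\pi_*E$ whose symbol is not diagonal. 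Your argument does not need this stronger statement, since the image of $\nabla$ composed with the adjunction consists exactly of operators with pullback symbols, which is the case your key observation covers; similarly, your weight computation correctly uses the hypothesis that the weight is $\pi^*w$, without which the pushforward of the left vertical map would not be a homothety of $\pi_*E$.
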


\begin{proof}
We first consider the special case when the finite étale morphism $\pi$ is Galois. 
We denote the Galois group by $\Gamma$. In the situation of
a $\Gamma$-covering $\pi: X \rightarrow Y$, one can easily establish the following equalities
\begin{enumerate}[label=(\roman*)]
\item\label{firstprop} if $T_X$ and $T_Y$ are the tangent sheaves of $X$ and $Y$, then
$$ T_X = \pi^* T_Y \qquad \text{and} \qquad  (\pi_*T_X)^\Gamma = T_Y,$$
where $( \ \ )^\Gamma$ denotes the $\mathcal{O}_Y$-submodule  of $\Gamma$-invariant sections.
\item\label{secondprop} for any vector bundle $E$ over $X$ and for any $\gamma \in \Gamma$
$$ \gamma^* \mathcal{A}(E) = \mathcal{A}(\gamma^* E).$$
\item\label{thirdprop} for any vector bundle $F$ over $Y$
$$ (\pi_* \mathcal{A}(\pi^*F))^\Gamma = \mathcal{A}(F).$$
\end{enumerate}
Consider now a vector bundle $E$ equipped with a $\pi^* \lambda$-flat connection on $X$. Then, considering that 
$\pi^* \lambda$ is $\Gamma$-invariant, we obtain that for every $\gamma \in \Gamma$ the vector bundle
$\gamma^* E$ is equipped with a $\pi^* \lambda$-flat connection  --- here we have used properties \ref{firstprop} and \ref{secondprop}.
Since the weight $\pi^*w$ does not depend on $\gamma$, we can apply Lemma \ref{sum} and we obtain a $\pi^* \lambda$-flat
connection on the direct sum 
$$ \bigoplus_{\gamma \in \Gamma} \gamma^* E = \pi^* ( \pi_* E),$$
i.e. a Lie algebra homomorphism
$$\begin{tikzcd} \mathcal{A}(\pi^* \lambda) \ar[r]& \mathcal{A}(\pi^*(\pi_* E)).\end{tikzcd}$$
Now we apply $(\pi_* \  \ )^\Gamma$ as well as property \ref{thirdprop} and we obtain a $\lambda$-flat connection on $\pi_* E$.

Next, we consider a general étale cover $\pi: X \rightarrow Y$. Then there exists a Galois cover $\varphi: Z \rightarrow X$
such that $\pi' = \pi \circ \varphi : Z \rightarrow X \rightarrow Y$ is also Galois --- actually $\pi'$ can be constructed as Galois closure of $\pi$. We can now apply the previous result to the vector bundle $\varphi^*E$ over the Galois cover 
$Z$ and we obtain a 
$\lambda$-flat connection on $\pi'_*( \varphi^* E) = \pi_*( E \otimes \varphi_* \mathcal{O}_Z)$. But 
$\varphi_* \mathcal{O}_Z$ contains $\mathcal{O}_X$ as a direct summand, hence we can decompose 
$E \otimes \varphi_* \mathcal{O}_Z = E \oplus \mathcal{F}$, where $\mathcal{F}$ is a residual vector bundle. Now we 
project the $\lambda$-flat connection onto the factor $\pi_*E$ and we obtain the desired result.
\end{proof}

It is sometimes useful to calibrate the weight of a twisted connection.  We have
\begin{lemma}\label{scale} For any non-zero integer $n$, there exists a canonical isomorphism 
of Atiyah sequences
$$\begin{tikzcd}
    0\ar[r] & \mathcal{O}_X\ar[r]\ar[d,".n"] & \mathcal{A}(\lambda)\ar[r]\ar[d,"\cong"] & T_X\ar[r]\ar[d,equal] & 0 \\
    0\ar[r] & \mathcal{O}_X\ar[r] & \mathcal{A}(\lambda^{\otimes n})\ar[r] & T_X\ar[r] &0,
\end{tikzcd}$$
where $\widehat{D}\in \mathcal{A}(\lambda)$ acts on $s_1\otimes\dots\otimes s_n\in \mathcal{O}(\lambda^{\otimes n})$ by
$\sum_{i=1}^n s_1\otimes \dots\otimes \widehat{D}(s_i)\otimes\dots\otimes s_n.$
\end{lemma}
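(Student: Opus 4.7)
The plan is to define the middle vertical map $\Phi : \mathcal{A}(\lambda) \to \mathcal{A}(\lambda^{\otimes n})$ by the Leibniz-type formula spelled out in the statement, and then to conclude that it is an isomorphism by the five lemma. Concretely, for a local section $\widehat{D}$ of $\mathcal{A}(\lambda)$ with symbol $D \in T_X$, I set $\Phi(\widehat{D})$ to be the $\mathbb{C}$-linear endomorphism of $\lambda^{\otimes n}$ specified on simple tensors by
$$\Phi(\widehat{D})(s_1 \otimes \dots \otimes s_n) = \sum_{i=1}^n s_1 \otimes \dots \otimes \widehat{D}(s_i) \otimes \dots \otimes s_n.$$

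The first step is to check that this formula descends to a well-defined operator on $\lambda^{\otimes n} = \lambda \otimes_{\mathcal{O}_X} \cdots \otimes_{\mathcal{O}_X} \lambda$. Using the Atiyah relation $\widehat{D}(fs) = D(f)s + f\widehat{D}(s)$, one sees that shifting a function $f \in \mathcal{O}_X$ from one tensor factor to another before applying $\Phi(\widehat{D})$ yields the same result, namely $f \cdot \Phi(\widehat{D})(s_1 \otimes \dots \otimes s_n) + D(f)\cdot s_1 \otimes \dots \otimes s_n$, regardless of which factor carries $f$. The very same computation identifies $\Phi(\widehat{D})$ as a first-order differential operator on $\lambda^{\otimes n}$ with scalar symbol equal to $D$, hence lands in $\mathcal{A}(\lambda^{\otimes n})$ and makes the right-hand square of the asserted diagram commute.

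For the left-hand square, I specialise to $\widehat{D} = g \cdot \Id_\lambda$ for $g \in \mathcal{O}_X$; then $\widehat{D}(s_i) = g s_i$ and the formula collapses to $\Phi(\widehat{D})(s_1 \otimes \dots \otimes s_n) = n g \cdot s_1 \otimes \dots \otimes s_n$, so the restriction of $\Phi$ to $\mathcal{O}_X \subset \mathcal{A}(\lambda)$ is indeed multiplication by $n$. At this stage $\Phi$ is a morphism of extensions of $T_X$ by $\mathcal{O}_X$ whose map on quotients is the identity and whose map on kernels is multiplication by $n$; since $n$ is a nonzero integer and we work in characteristic zero, both the outer maps are isomorphisms of sheaves, so the five lemma forces $\Phi$ to be an isomorphism of Atiyah sequences.

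The only mildly delicate step is the initial well-definedness verification on $\otimes_{\mathcal{O}_X}$, which is the genuine content of the lemma; everything else is a direct unpacking of definitions, and I anticipate no serious obstacle.
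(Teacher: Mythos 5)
Your proof is correct and matches the paper's (implicit) argument: the paper states the defining formula inside the lemma itself and omits the verification, which is exactly the routine check you carry out (well-definedness over $\otimes_{\mathcal{O}_X}$, commutativity of the two squares, five lemma in characteristic zero). The only point worth adding is that $\Phi$ also preserves Lie brackets --- the diagonal action on a tensor power is a Lie algebra homomorphism of first-order operators --- which is what allows the lemma to transport $\lambda$-\emph{flat}, and not merely $\lambda$-twisted, connections in its later applications.
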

As a consequence, for a vector bundle $E$, a $\lambda$-twisted connection of weight $w$ is equivalent to a $\lambda^{\otimes n}$-twisted connection of weight $\frac{1}{n}w$.  This implies that when considering $\lambda$-twisted connections, one can ignore any torsion factors in the line bundle $\lambda$.

Inspired by this, one can construct the formal Atiyah algebroid $\mathcal{A}(\lambda^{\otimes s})$, for any scalar $s\in \mathbb{C}$.  By taking $s=w$, one obtains (if $s\neq 0$) a diagram like (\ref{twistedcon}), where now the weight is $1$.  Often this is taken to be part of the definition, see e.g. \cite[\S 2]{tsuchimoto:1993}, where a $\lambda$-flat connection with weight 1 is referred to as an action of the Atiyah algebroid on the vector bundle.

If $E$ has a $\lambda$-twisted (or $\lambda$-flat) connection of weight $0$, the map $\nabla$ will factor through $\mathcal{A}(\lambda)/\mathcal{O}_X\cong T_X$, hence gives rise to an ordinary (flat) connection.  Also if $\lambda\cong \mathcal{O}$, we have that canonically $\mathcal{A}(\lambda)\cong \mathcal{O}\oplus T_X$, and hence $\nabla|_{T_X}$ is an ordinary (flat) connection.  Remark that this last fact, combined with Lemma \ref{scale}, imply that if a rank $r$ vector bundle $E$ carries a $\lambda$-flat connection with weight $1$, then $\frac{c_1(E)}{r}=c_1(\lambda)\in H^2(X,\mathbb{Q})$, see \cite[Lemma 5]{MOP:2013}.

In particular, one can remark that the pull-back of any line bundle $\lambda$ to $|\lambda^{\times}|$, the total space of $\lambda$ minus the zero section, is canonically trivial.  Since ($\lambda$-twisted) connections also pull back, this associates with a $\lambda$-twisted/flat connection an ordinary (flat) connection on $|\lambda^{\times}|$.

\begin{remark}The entire discussion above was set in an algebro-geometric context, but a parallel theory exists in differential geometry.  In that setting, a flat (projective) connection gives rise to a (projective) representation of the fundamental group of $X$, called the \emph{monodromy representation}.  A $\lambda$-flat connection will give rise to a representation of a particular central extension of the fundamental group of $X$, which can itself be interpreted as the fundamental group of $|\lambda^{\times}|$.  We will work in the complex setting throughout the paper, and freely associate this monodromy representation to the algebraic connections we are working with through analytification.
\end{remark}

\subsection{Flat morphisms}A morphism $\Phi$ between bundles $E$ and $F$ over $S$, equipped with connections $\nabla^E$ and $\nabla^F$, is said to be \emph{flat} (or to preserve the connections), if for every open $U
\subset S$, every $X\in T_S(U)$, and every $s\in E(U)$, we have $$\nabla^F_X(\Phi s)=\Phi\left(\nabla^E_X s\right).$$
This is equivalent to $\Phi$, thought of as a section of $F\otimes E^*$, being flat for the tensor connection, i.e. $\nabla^{F\otimes E^*}_X\Phi=0$ for all vector fields $X$.

If $E$ and $F$ are equipped with projective connections, then $\Phi$ is flat if locally $$\widetilde{\nabla}^F_X(\Phi s)-\Phi\left(\widetilde{\nabla}^E_X s\right)=\omega(X) \Phi(s),$$
for some one-form $\omega$, where $\widetilde{\nabla}^F$ and $\widetilde{\nabla}^E$ are local connections lifting the projective connections on $E$ and $F$.
Flat morphisms between bundles with projective connections have constant rank.  Note that one could develop the notion of flat morphism also for $\lambda$-twisted connection, but the projective case will suffice for our purposes.
\subsection{(Twisted) \texorpdfstring{$\mathcal{D}$}{D}-modules}
For completeness, we mention the more common guises that the structure of a $\lambda$-flat connection takes in the literature.  It is a standard result that a flat connection on a bundle $E$ is equivalent to a (left) $\mathcal{D}$-module structure on $E$, where $\mathcal{D}$ is the sheaf of differential operators.

We can similarly look at $\mathcal{D}_{\lambda}$, which is the sheaf of differential operators on a line bundle $\lambda$.  This is an example of a sheaf of twisted differential operators, or \emph{tdo} for short.  There is now again an equivalence between left $\mathcal{D}_{\lambda}$-module structures on coherent $\mathcal{O}_X$-modules $E$ (which necessarily have to be locally free), and $\lambda$-flat connections on $E$ with weight $1$.

\subsection{Flat projective connections with finite monodromy}
Let $E$ be a vector bundle of rank $r$ equipped with a flat projective connection $\nabla$. Then, fixing a point $x \in X$, we can consider 
the associated projective monodromy representation of the topological fundamental group
$$\begin{tikzcd}   \rho_{(E, \nabla)} : \pi_1(X,x) \ar[r]& \mathbb{P}\mathrm{GL}(r).\end{tikzcd}$$
We will in particular be interested in flat projective connections that have finite mo\-no\-dro\-my, i.e. $\mathrm{im} \  \rho_{(E, \nabla)}$ is a finite
subgroup of $\mathbb{P}\mathrm{GL}(r)$.

Let $\nabla$ be a flat projective connection on
a rank $r$ vector bundle $E$. 
We say that $\nabla$ is \emph{trivial} if its monodromy representation is trivial.

With the above notation we have the following two lemmas, which can easily be deduced from \cite[Theorem 7.6]{ramanan:2005}.

\begin{lemma} \label{trivialmonodromy}
The flat projective connection $\nabla$ has trivial monodromy 
if and only if there exists a line bundle $L$ such that $E\cong L^{\oplus r}$, with the projective connection $\nabla$ induced by the $L$-flat connection of weight $1$

$$\begin{tikzcd}
    0\ar[r] & \mathcal{O}_X\ar[r]\ar[d,".\mathrm{Id}"] & \mathcal{A}(L)\ar[r]\ar[d,"\Delta"] & T_X\ar[r]\ar[d,equal] & 0 \\
    0\ar[r] & \mathcal{E}nd(L^{\oplus r})\ar[r] & \mathcal{A}(L^{\oplus r}) \ar[r] & T_X\ar[r] &0,
\end{tikzcd}$$ where $\Delta$ is the diagonal embedding. In that case we will say that the projective connection $\nabla$ is trivial.
\end{lemma}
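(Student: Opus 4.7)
The lemma asserts an equivalence between the triviality of the monodromy of $\nabla$ and the existence of a line bundle $L$ realizing $E \cong L^{\oplus r}$ with $\nabla$ induced by the diagonal $L$-flat connection $\Delta$ of weight one.  My plan is to treat the two directions separately.

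For the reverse direction, starting from $E = L^{\oplus r}$ with $\nabla$ induced by $\Delta$, I would show by direct computation that under the canonical identification $\mathbb{P}(E) \cong X \times \mathbb{P}^{r-1}$ every constant projective section is parallel.  Indeed, for any $v \in \mathbb{C}^r \setminus \{0\}$, any non-vanishing local section $s$ of $L$, and any local lift $\widehat{D} \in \mathcal{A}(L)$ of $D \in T_X$, one has $\Delta(\widehat{D})(s\cdot v) = \widehat{D}(s)\cdot v$, which is a scalar multiple of $s\cdot v$.  Hence every point of $\mathbb{P}^{r-1}$ is fixed by parallel transport, so the monodromy representation is trivial.

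For the forward direction, the plan is to first produce the splitting $E \cong L^{\oplus r}$ and then identify $\nabla$ with the diagonal projective connection.  Since flat $\mathrm{PGL}_r$-bundles on $X$ are classified (analytically) by their monodromy representation, trivial monodromy implies that $\mathbb{P}(E)$ is isomorphic, as a flat projective bundle, to the trivial $X \times \mathbb{P}^{r-1}$; hence $\mathbb{P}(E)$ is algebraically trivial by GAGA.  The short exact sequence of sheaves of groups $1 \to \mathcal{O}_X^\times \to \mathrm{GL}_r(\mathcal{O}_X) \to \mathrm{PGL}_r(\mathcal{O}_X) \to 1$ then yields a line bundle $L$ with $E \cong L^{\oplus r}$, which is essentially the content of \cite[Theorem 7.6]{ramanan:2005}.

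Finally, I would identify $\nabla$ with the projective connection induced by $\Delta$.  Both are flat projective connections on $X \times \mathbb{P}^{r-1}$ with trivial monodromy, and any two such are related by a locally constant $\mathrm{PGL}_r$-gauge transformation, i.e., by a global automorphism of the trivial projective bundle.  Post-composing the isomorphism $E \cong L^{\oplus r}$ with a suitable lift of this automorphism to $\mathrm{GL}_r(\mathbb{C})\subset\mathrm{Aut}(L^{\oplus r})$ brings $\nabla$ into the diagonal form.  The main obstacle in this plan is precisely this last identification: verifying that the gauge freedom in the choice of isomorphism $E \cong L^{\oplus r}$ (which is an $\mathrm{GL}_r(\mathbb{C})$-worth, together with the freedom to twist $L$ by an $r$-torsion line bundle) is enough to bring the two flat projective connections into coincidence without leaving a residual obstruction.
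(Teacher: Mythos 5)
The paper offers no written proof of this lemma: it simply records that both Lemmas \ref{trivialmonodromy} and \ref{finitebecomestrivial} ``can easily be deduced from [Ramanan, Theorem 7.6]''. Your proposal supplies essentially the standard argument that this citation stands in for, and its overall architecture (easy direction by exhibiting parallel constant sections; hard direction via Riemann--Hilbert, the sequence $1 \to \mathcal{O}_X^\times \to \mathrm{GL}_r(\mathcal{O}_X) \to \mathrm{PGL}_r(\mathcal{O}_X) \to 1$, and the $r$-torsion ambiguity in $L$) is the right one.

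Two points need repair, however. First, the appeal to GAGA is not available: the spaces to which this lemma is applied (finite covers of $\mathcal{M}_g$ and $\mathcal{R}_g$) are not proper, and in fact the purely algebraic form of the statement is false on non-proper bases --- on $X=\mathbb{A}^1$ the connection $d+\mathrm{diag}(0,dx)$ on $\mathcal{O}^{\oplus 2}$ has trivial monodromy but its projectivization is not algebraically gauge-equivalent to the diagonal one, since its flat sections involve $e^{-x}$. The lemma must be read analytically, which is consistent with the paper's explicit remark that monodromy is always taken after analytification; once you work analytically the trivialization of the flat $\mathbb{P}^{r-1}$-bundle is immediate and GAGA is neither needed nor applicable. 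Second, your claim that any two flat projective connections with trivial monodromy on $X\times\mathbb{P}^{r-1}$ differ by a \emph{locally constant} gauge transformation is not correct: the comparison gauge transformation is the composite of the two flat trivializations and is in general a non-constant map to $\mathrm{PGL}_r$. The clean way around your ``residual obstruction'' worry is to reverse the order of operations: take the flat trivialization $\mathbb{P}(E)\cong X\times\mathbb{P}^{r-1}$ determined by $\nabla$ (canonical up to a single element of $\mathrm{PGL}_r(\mathbb{C})$), and only then lift this particular isomorphism of projective bundles to $E\cong L^{\oplus r}$ via the exact sequence. With that choice, $\nabla$ is by construction the trivial projective connection on $\mathbb{P}(L^{\oplus r})$, which is exactly the one induced by $\Delta$ (any two local splittings of $\mathcal{A}(L)\to T_X$ differ by a one-form times the identity, which acts trivially on the projectivization), and the only remaining freedom is the twist of $L$ by an $r$-torsion line bundle that you already identified.
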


\begin{lemma} \label{finitebecomestrivial}
The flat projective connection $\nabla$ has finite monodromy if and only if there 
exists a finite \'etale morphism $f:\widetilde{X}\rightarrow X$ such that the flat projective connection $f^*\nabla$ on $f^*E$ is trivial.
\end{lemma}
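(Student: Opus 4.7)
The plan is to translate the statement into one about the projective monodromy representation $\rho_{(E,\nabla)}: \pi_1(X,x) \to \mathbb{P}\mathrm{GL}(r)$ and combine the Galois correspondence for finite étale covers with Lemma \ref{trivialmonodromy}, which already identifies triviality of a flat projective connection with triviality of its monodromy. The two implications are then essentially formal.

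For the ``only if'' direction, assume that $G := \mathrm{im}\,\rho_{(E,\nabla)}$ is finite, so that $K := \ker \rho_{(E,\nabla)}$ is a normal subgroup of finite index in $\pi_1(X,x)$. By the Galois correspondence for topological covering spaces, applied to $X^{\mathrm{an}}$, there exists a connected finite covering space whose fundamental group maps isomorphically to $K$; in the complex algebraic setting this covering is the analytification of a connected finite étale morphism $f:\widetilde X \to X$ (Riemann existence / GAGA). The analytic pullback of a flat algebraic connection is again the analytification of the pullback algebraic connection, and its monodromy representation is the composition $\rho_{(f^*E, f^*\nabla)} = \rho_{(E,\nabla)}\circ f_*$, which is trivial by construction. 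Lemma \ref{trivialmonodromy} then gives that $f^*\nabla$ is trivial in the sense of the preceding definition.

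For the ``if'' direction, suppose $f:\widetilde X \to X$ is a finite étale cover of degree $d$ such that $f^*\nabla$ is trivial. By Lemma \ref{trivialmonodromy} the monodromy of $f^*\nabla$ is trivial, i.e.\ $\rho_{(E,\nabla)}\circ f_* = 1$. Hence $f_*\pi_1(\widetilde X,\tilde x) \subseteq \ker \rho_{(E,\nabla)}$; since $f_*\pi_1(\widetilde X,\tilde x)$ is of finite index (dividing $d$) in $\pi_1(X,x)$, so is $\ker \rho_{(E,\nabla)}$, and therefore $\mathrm{im}\,\rho_{(E,\nabla)}$ is finite.

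The argument is essentially formal once Lemma \ref{trivialmonodromy} is in hand; the only point requiring any care is the passage between algebraic connections on $X$ and their analytic monodromy representations on $X^{\mathrm{an}}$, and the corresponding algebraic realization of a finite-index subgroup of $\pi_1(X,x)$ as a finite étale cover. As the paper works throughout in the complex setting and explicitly invokes analytification in the previous remark on monodromy, this step is routine and I would simply cite it rather than elaborate.
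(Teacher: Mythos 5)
Your argument is correct. Note that the paper itself gives no proof of this lemma: both it and Lemma \ref{trivialmonodromy} are simply stated as "easily deduced from [Ram05, Theorem 7.6]", and the covering-space argument you give (finite image $\Leftrightarrow$ finite-index kernel, realized algebraically via Riemann existence, with monodromy of the pullback being $\rho\circ f_*$) is exactly the standard content behind that citation. One small simplification: since the paper \emph{defines} a flat projective connection to be trivial when its monodromy representation is trivial, you do not actually need to invoke Lemma \ref{trivialmonodromy} at all --- both directions reduce immediately to the statement about $\rho\circ f_*$.
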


\section{Determinants and slopes}\label{dets}
\subsection{} In order to combine various projective representations, we need to be able to compare the slopes of the vector bundles they are defined on.  

Let $\mathcal{R}_g$ be the moduli stack of connected \'etale double covers $\calCt$, with involution $\sigma$, of smooth curves $\calC$ of genus $g$.  There are natural forgetful morphisms $\mathcal{R}_g\rightarrow \mathcal{M}_{2g-1}$, $\pi_g:\mathcal{R}_g\rightarrow \mathcal{M}_g$ (given by forgetting the base curve or the covering curve respectively) and a morphism $\mathcal{R}_g\rightarrow \mathcal{A}_{g-1}$ (given by taking the Prym variety).  With each of these one can associate a Hodge line bundle, and the ones coming from $\mathcal{M}_g$ and $\mathcal{A}_{g-1}$ are isomorphic (see e.g. \cite[Lemma 7.18]{casalaina-laza:2009}) --- we shall refer to this line bundle as $\lambda$ (the line bundle coming from $\mathcal{M}_{2g-1}$ is twice $\lambda$).

Let $\pi_J:J^*\rightarrow \mathcal{M}_g$ be the universal Jacobian torsor parametrizing degree $g-1$ line bundles over smooth genus $g$ curves $\calC$, and let $\Theta \subset J^*$ be the relative canonical theta divisor.  Likewise, let $\pi_{P}:P^{\rm even}\rightarrow \mathcal{R}_g$ be the Prym torsor, parametrizing line bundles on $\calCt$ that satisfy $\sigma^*L\cong L^*\otimes K_{\calCt}$, such that $H^0(\calCt,L)$ is even-dimensional. 
On $P^{\rm even}$ we have a canonical divisor $\Xi$ and for the double cover $\widetilde{\mathcal{C}}\to\mathcal{C} $ we will denote by $\tilde{g}=2g-1$ the genus of $\widetilde{\mathcal{C}}$.
We are interested in $\pi_{J*}\Theta^k$ and $\pi_{P*}\Xi^k$, for $k>0$.  
\begin{proposition}\label{slope} We have over $\mathcal{R}_g$ 
$$\pi_g^*\det \left(\pi_{J*}\Theta^k\right)\cong \lambda^{\otimes\frac{1}{2}k^g(k-1)}\hspace{.5cm}
\text{and} \hspace{.5cm}\operatorname{rk} \left(\pi_{J*} \Theta^k\right)=k^g,$$
and 
$$\det \left(\pi_{P*}\Xi^k\right)\cong 
\lambda^{\otimes\frac{1}{2}k^{g-1}(k-1)}\hspace{.5cm}
\text{and} \hspace{.5cm}\operatorname{rk} \left(\pi_{P*}\Xi^k\right)=k^{g-1}.$$
\end{proposition}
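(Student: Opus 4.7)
The plan breaks naturally into three steps: compute the ranks fibrewise, apply Grothendieck--Riemann--Roch to reduce the determinants to a single integral, and evaluate that integral via the $k=1$ case.

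First, the ranks.  Each fibre of $\pi_J$ (resp.\ $\pi_P$) is a principally polarized abelian variety of dimension $g$ (resp.\ $g-1$), with $\Theta$ (resp.\ $\Xi$) inducing the principal polarization.  For any PPAV $(A,\Theta)$ of dimension $d$ and any $k>0$, Mumford's theorem gives $h^0(A, k\Theta) = k^d$ and vanishes higher cohomology.  Base change over $\mathcal{M}_g$ and $\mathcal{R}_g$ then yields the claimed ranks.

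Next, the determinants.  I would apply GRR to the smooth proper morphism $\pi : \mathcal{A} \to S$, where the tuple $(\mathcal{A}, \Theta, S, d)$ stands for either $(J^*, \Theta, \mathcal{M}_g, g)$ or $(P^{\mathrm{even}}, \Xi, \mathcal{R}_g, g-1)$.  Because the relative tangent bundle of an abelian scheme is pulled back from the base ($T_\pi = \pi^*\mathrm{Lie}(\mathcal{A}/S)$), one has $c_1(T_\pi) = -\pi^*\lambda$ and $\mathrm{td}(T_\pi) = \pi^*\mathrm{td}(\mathrm{Lie})$.  Extracting the degree-$(d+1)$ part of $e^{k\theta}\cdot \pi^*\mathrm{td}(\mathrm{Lie})$ and invoking the projection formula, all Todd contributions beyond the first push forward to zero for dimension reasons, and $\pi_*(\theta^d/d!) = 1$ by principal polarization.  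This gives
\[
c_1(\pi_*\Theta^k) \;=\; \frac{k^{d+1}}{(d+1)!}\,\pi_*\theta^{d+1} \;-\; \frac{k^d}{2}\,\lambda.
\]

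To finish, I would evaluate the remaining integral by specialising to $k=1$.  In each case, $\pi_*\Theta$ is a line bundle carrying a canonical nowhere-vanishing section: the universal Riemann theta function on $J^*$, or the universal Prym theta function on $P^{\mathrm{even}}$ (whose existence relies on the evenness condition defining $P^{\mathrm{even}}$).  Hence $c_1(\pi_*\Theta) = 0$, forcing $\pi_*(\theta^{d+1})/(d+1)! = \lambda/2$.  Substituting back yields $c_1(\pi_*\Theta^k) = k^d(k-1)\lambda/2$; taking $d=g$ and pulling back along $\pi_g$ gives the Jacobian formula, while $d=g-1$ gives the Prym formula, using the identification of the Hodge bundles coming from $\mathcal{M}_g$ and $\mathcal{A}_{g-1}$ on $\mathcal{R}_g$ recalled at the start of the section.

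The subtlest point is the Prym analogue of the triviality of $\pi_{J*}\Theta$: while for an individual Prym variety the canonical Prym theta divisor is classical (Mumford, Beauville), ensuring that its defining section globalises to a nowhere-vanishing relative section of $\pi_{P*}\Xi$ over the stack $\mathcal{R}_g$ takes a small amount of care, especially regarding the even/odd decomposition of the Prym torsor.  Everything else is a routine GRR computation.
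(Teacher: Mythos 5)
Your proposal is correct and rests on the same core mechanism as the paper's proof: Grothendieck--Riemann--Roch, the fact that only the degree-one Todd term survives pushforward, and normalization at $k=1$ via the nowhere-vanishing canonical theta section, yielding $c_1(\pi_*\Theta^k)=\tfrac{1}{2}k^d(k-1)\lambda$ in relative dimension $d$. The differences are organizational but worth noting. The paper simply cites Kouvidakis (Theorem C) for the Jacobian case and only carries out the GRR computation for the Prym case, whereas you treat both uniformly. More substantively, the paper does \emph{not} run GRR directly on the torsor $P^{\rm even}\to\mathcal{R}_g$: since a torsor has no zero section, Kouvidakis's lemma identifying $\Omega_{\,\cdot\,/S}$ with a pullback from the base is first applied to the degree-zero abelian scheme $\prym$, and is transported to the torsor only after passing to the finite \'etale cover $\widetilde{\mathcal{R}}_g$ parametrizing a compatible even theta characteristic $\kappa$, which trivializes the torsor via $L\mapsto L\otimes\kappa$; the answer is then descended to $\mathcal{R}_g$ using that $\Pic(\mathcal{R}_g)$ is generated by $\lambda$ (Farkas--Ludwig). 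Your shortcut --- asserting $T_\pi=\pi^*\mathrm{Lie}$ directly for the torsor --- is true (translations act trivially on the relative cotangent bundle, so the statement is \'etale-local on the base), but you should either justify it or route through the cover as the paper does; note also that $J^*$ and $P^{\rm even}$ are torsors, not abelian schemes, so "Mumford's theorem for a PPAV" and "Poincar\'e's formula" need the same \'etale-local reduction. What your version buys is a cleaner, uniform argument that avoids the auxiliary stack $\widetilde{\mathcal{R}}_g$ and the Farkas--Ludwig input; what the paper's version buys is that every geometric input is applied only to an honest abelian scheme with a section. Finally, you correctly identify that the $k=1$ normalization on the Prym side needs the canonical section of $\pi_{P*}\Xi$; the paper disposes of this with "by definition," since $\Xi$ is defined as an effective divisor containing no fiber, so no additional care is actually required there.
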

\begin{proof}The formulas for the ranks are just instances of a classically known formula for principally polarized abelian varieties, see e.g. \cite[Corollary 3.2.8]{birkenha-lange:2004}; this was first obtained in terms of complex analysis, and can be shown algebraically through an application of the Hirzebruch-Riemann-Roch formula 
and the Kodaira vanishing theorem.  The formula for the determinant of $\pi_{J*}\Theta^k$ is given in \cite[Theorem C]{kouvidakis:2000}.   The formula for the determinant of $\pi_{P*}\Xi^k$ can be obtained similarly, based on a computation done by Moret-Bailly in \cite[page 256]{moret-bailly:1985} and developed also in the alternative proof of Theorem C in \cite[Section 2.4]{kouvidakis:2000}.

Let $\pi_{P_0}:\prym\to \mathcal{R}_g$ denote the relative degree 0 Prym variety over $\mathcal{R}_g$, that is the connected component of the kernel of the norm map $\operatorname{Nm}$ that contains the origin. Recall also that we set $\pi_P: P^{\rm even}\to \mathcal{R}_g$ for the universal degree $\tilde{g}-1$ Prym torsor defined above. Let $\widetilde{\mathcal{R}}_g$ denote the moduli space parametrizing double covers $\widetilde{C}\to C$ in $\mathcal{R}_g$ together with a theta characteristic $\kappa$ on $\widetilde{C}$ such that $\operatorname{Nm}(\kappa)=\omega_C$ with $h^0(\widetilde{C},\kappa)$ even. These are exactly the theta characteristics that send, via tensor product, the degree zero Prym variety $\prym$ to $P^{\rm even}$. We will call $\alpha:\widetilde{\mathcal{R}}_g \to \mathcal{R}_g$ the natural forgetful map. We will also denote by $\widetilde{\prym}$ and $\widetilde{P}^{\rm even}$ the pullback of the universal families to $\widetilde{\mathcal{R}}_g$, and by $\phi$ the map that sends $L\in \widetilde{\prym}$ sitting over the moduli point $[\widetilde{C}\to C, \kappa]$ to $L\otimes \kappa\in \widetilde{P}^{\rm even}$. 
The situation is resumed in the diagram (\ref{diagrammichele}), which is the analogue in terms of Prym varieties of the diagram in terms of Jacobians and Picard varieties that appears in \cite[Section 4.1]{kouvidakis:2000}. The maps $\widetilde{\pi}_{P_0}$ and $\widetilde{\pi}_P$ are just the pull-backs of the Prym fibrations to $\widetilde{\mathcal{R}}_g$, and we will denote respectively by $\tilde{s}:\widetilde{\mathcal{R}}_g \to \widetilde{\prym}$ and $\tilde{\sigma}:\widetilde{\mathcal{R}}_g \to \widetilde P^{\rm even} $ the zero section and the section that sends $[\widetilde{C}\to C, \eta]$ to $\eta$:

\begin{equation}\label{diagrammichele}
\begin{tikzcd}
\prym \ar[d,swap,"\pi_{P_0}"] & {\widetilde{\prym}} \ar[l,swap,"\gamma"] \ar[rr,"\phi"] \ar[dr,swap, "\widetilde \pi_{P_0}"] & & \widetilde P^{\rm even} \ar[r,"\delta"] \ar[dl,"\widetilde \pi_P"]
    & P^{\rm even} \ar[d,"\pi_P"] \\
\mathcal{R}_g & &\widetilde{\mathcal{R}}_g \ar[ll,"\alpha"] \ar[rr,swap,"\alpha"]
   & & \mathcal{R}_g.\\
\end{tikzcd}
\end{equation}
By \cite[Lemma 3.2]{kouvidakis:2000}, we have that $\Omega_{\widetilde{\prym}/\widetilde{\mathcal{R}}_g}\cong \tilde{\pi}_{P_0}^*\tilde{s}^*\Omega_{\widetilde{\prym}/\widetilde{\mathcal{R}}_g}$. The map $\phi$ is an isomorphism, hence we also get

\begin{equation}\label{kouvipullback}
\Omega_{\widetilde{P}^{\rm even}/\widetilde{\mathcal{R}}_g}\cong \tilde{\pi}_{P}^*\tilde{\sigma}^*\Omega_{\widetilde{P}^{\rm even}/\widetilde{\mathcal{R}}_g}.
\end{equation}

Thus, we observe that $\Omega^\vee_{\widetilde{P}^{\rm even}/\widetilde{\mathcal{R}}_g} \cong \widetilde \pi_{P}^*E$, where $E$ is a vector bundle on $\widetilde{\mathcal{R}_g}$.
By Equation \ref{kouvipullback}, $E$ can be seen as $\tilde{\sigma}^*\Omega^\vee_{\widetilde{P}^{\rm even}/\widetilde{\mathcal{R}}_g}$. On the other hand, since $\Omega_{\widetilde{P}^{\rm even}/\widetilde{\mathcal{R}}_g}$ is trivial on the (complete, connected and reduced) fibers of $\widetilde \pi_P$, $E$ is also isomorphic to $\widetilde \pi_{P *}\Omega^\vee_{\widetilde{P}^{\rm even}/\widetilde{\mathcal{R}}_g} $, and there is an isomorphism $\widetilde \pi_{P}^*(\widetilde \pi_{P *}\Omega^\vee_{\widetilde{P}^{\rm even}/\widetilde{\mathcal{R}}_g}) \cong \Omega^\vee_{\widetilde{P}^{\rm even}/\widetilde{\mathcal{R}}_g} $.

Let us set $\widetilde{\Xi}:= \delta^*\Xi$.  We will apply the Grothendieck-Riemann-Roch theorem to the fibration $\widetilde \pi_P: \widetilde{P}^{\rm even}\to \widetilde{\mathcal{R}}_g$ and obtain

\newcommand{\td}{\mathrm{Td}}
\newcommand{\ch}{\mathrm{ch}}

\begin{equation} 
\ch(\widetilde \pi_{P !}(\widetilde{\Xi}^k))= \widetilde \pi_{P *}(\ch(\widetilde\Xi^k) \cdot \td(\Omega^\vee_{\widetilde{P}^{\rm even}/\widetilde{\mathcal{R}}_g})). 
\end{equation}

On the left side, the higher direct images vanish so we are left with $c_1(\widetilde \pi_{P *}(\widetilde\Xi^k))$. On the right side: for the Todd class, we use the projection formula on Equation \ref{kouvipullback}, and hence we get 

\begin{equation}\label{grr}
c_1(\widetilde \pi_{P *}(\widetilde\Xi^k))=\widetilde \pi_{P *}(\ch(\widetilde\Xi^k))\cdot \td (\tilde{\sigma}^*\Omega^\vee_{\widetilde{P}^{\rm even}/\widetilde{\mathcal{R}}_g}).
\end{equation}

Now we have $\td (\tilde{\sigma}^*\Omega^\vee_{\widetilde{P}^{\rm even}/\widetilde{\mathcal{R}}_g}) = 1 - \frac{c_1(\tilde{\lambda})}{2}$ plus higher degree terms, where $\tilde{\lambda}$ is the pull-back of $\lambda$ to $\widetilde{\mathcal{R}_g}$ (recall that the Hodge line bundles coming from $\mathcal{M}_g$ and $\mathcal{A}_{g-1}$ are the same class $\lambda$ on $\mathcal{R}_g$). Hence, by using the Poincaré formula and the fact that the relative dimension of the fibration is $g-1$, we obtain that the degree one part of equality (\ref{grr}) reads

\begin{equation}\label{grrformula}
c_1(\widetilde \pi_{P *}(\widetilde\Xi^k))=\frac{k^g}{g!} \widetilde \pi_{P *}c_1^{g}(\widetilde\Xi)-\frac{k^{g-1}}{2}c_1(\tilde{\lambda}).
\end{equation}

We will now use the formula in Equation (\ref{grrformula}) in order to compute the class of $\det(\pi_{P*} \Xi^k)$.
 Since the Picard group of $\mathcal{R}_g$ is generated by $\lambda$, see \cite{FarkasLudwig}, following the argument of \cite[4.2]{kouvidakis:2000}, we can suppose that $c_1(\pi_{P *}\Xi^k)=s(k)c_1(\lambda)$ and that $\pi_{P *}c_1^{g}(\Xi)=tc_1(\lambda)$, for some coefficients $s(k), t\in \mathbb{Z}$. This implies in turn that $c_1(\widetilde \pi_{P *}\widetilde\Xi^k)=s(k) c_1(\tilde\lambda)$ and that $\widetilde \pi_{P *}c_1^{g}(\widetilde\Xi)=t c_1(\tilde\lambda)$, on $\widetilde{\mathcal{R}}_g.$ Hence $s(k)=\frac{k^g}{g!}t-\frac{k^{g-1}}{2}$. The line bundle $\widetilde \pi_{P *} \widetilde{\Xi}$ has by definition a nowhere vanishing section, hence $s(1)=0$ and $t=\frac{g!}{2}$. This implies that
$\det \left(\pi_{P *}(\Xi^k)\right)= 
\frac{1}{2}k^{g-1}(k-1) \lambda.$ In multiplicative notation, this is our claim.
\end{proof}

\section{Flatness of the relative Oxbury-Pauly isomorphism}\label{flatness}
\subsection{Conformal embeddings}Given a family of  smooth projective curves $\calC\rightarrow S$, a simple, simply-connected reductive algebraic group $G$, and a level $k\in\mathbb{N}$, one can consider the bundles of non-abelian theta functions of level $k$, which are obtained as $\pi_*\LL_G^k$, where $\pi:\MM_{G,S}\rightarrow S$ is the moduli space of stable $G$-torsors over $\calC$, and $\LL_G$ is the relatively ample generator of the moduli space of $G$-bundles $Pic(\MM_G/S)$.  This bundle $\pi_*\LL_G^k$ can be equipped with the flat projective Hitchin connection, see e.g. \cite{hitchin:1990,BBMP:2020,BMW:2023}. 

Projectively these bundles can also be identified with the bundles of conformal blocks arising in the Wess-Zumino-Witten model.  On the latter there is also a natural connection, or to be precise a $\lambda$-flat connection, for $\lambda$ the Hodge determinant bundle, with weight $\frac{c}{2}$, where $c=\frac{k\dim G}{k+h^{\vee}_G}$ is the central charge from the WZW model (here $h^{\vee}_G$ is the dual Coxeter number of $G$, which is $n$ for $G=\SL_n$) \cite{TUY:1989,looijenga:2013}.   Projectively these connections are isomorphic \cite{laszlo:1998}.

If $\phi: G\rightarrow H$ is a morphism of such groups (it suffices to give the morphism $\mathfrak{g}\rightarrow \mathfrak{h}$ between their simple Lie algebras), we can associate with it an integer $d_{\phi}$, known as the \emph{Dynkin index}.  It can be expressed in a number of equivalent ways --- a useful one for our purposes is as follows: from $\phi$ we get, by extension of structure group, a natural morphism $\widetilde{\phi}:\MM_{G}\rightarrow\MM_{H}$, and we have that $\widetilde{\phi}^*(\LL_H)\cong \LL_G^{d_{\phi}}$.  

The morphism $\phi$ is said to be a \emph{conformal embedding} if the central charge for $G$ at level $d_{\phi}$ equals the central charge for $H$ at level $1$ --- this numerical condition ensures the compatibility of the Segal-Sugawara constructions for the affine Lie algebras $\widehat{\mathfrak{g}}$ and $\widehat{\mathfrak{h}}$. (Note that it only may be an embedding for the Lie algebras.)  It was shown by Belkale that, given a morphism $\phi$ as above, the natural map between bundles of conformal blocks (at level $d_{\phi}$ for $G$, and level $1$ for $H$) is flat if $\phi$ is a conformal embedding \cite[Proposition 5.8]{belkale:2009}.

Moreover, if a family of curves $\calCt\rightarrow S$ is equipped with a Galois action of a group $\Gamma$ (relative to $S$), and $\Gamma$ also acts on $G$, one can consider the associated bundles of \emph{twisted conformal blocks}, which also come equipped with a $\lambda$-flat connection (see \cite{szcesny:2006,damiolini:2020,deshpande.mukhopadhyay:2023,hong.kumar:2018}).  It was shown in \cite[Theorem A.4.1]{BBMP:2024} that $\Gamma$-equivariant conformal embeddings (for $\Gamma$ cyclic) still give rise to flat morphisms between the corresponding bundles of twisted conformal blocks. Furthermore, in the case of $G=\SL_n$, and $\Gamma=\mathbb{Z}/2\mathbb{Z}$ acting without fixed points on $\calCt$, a flat projective connection (dubbed the \emph{Prym-Hitchin connection}) was constructed, through a heat operator, on non-abelian theta functions on the moduli space of anti-invariant bundles over $\calCt$ in \cite{BBMP:2024}, for which there is again a Laszlo correspondence with the connection on the relevant bundles of twisted conformal blocks.

A list of conformal embeddings was given in \cite{schellekens-warner:1986}.  The one of relevance for us is one of the simplest ones, given on the level of Lie algebras by $\mathfrak{sl}_2\hookrightarrow \mathfrak{sl}_3$, induced by the adjoint representation $\ad:\mathfrak{sl}_2\rightarrow \End(\mathfrak{sl}_2)$, which has Dynkin index $4$.

It is now an elementary --- but for our purposes crucial --- observation that this embedding $\mathfrak{sl}_2\hookrightarrow \mathfrak{sl}_3$ is equivariant in two different ways: besides for the trivial group, we can also consider the action of $\Gamma=\mathbb{Z}/2\mathbb{Z}$, which acts trivially on $\mathfrak{sl}_2$, but via a non-trivial involution on $\mathfrak{sl}_3$.  To this end, we identify $\mathbb{C}^3$ with $\mathfrak{sl}_2$ (traceless $2\times 2$ matrices), and use the invariant symmetric bilinear form $\langle A,B\rangle=\tr(AB)$.  If we identify $\mathbb{C}^3\cong\mathfrak{sl}_2$ by
$$\begin{tikzcd}\begin{pmatrix}a\\ b\\ c\end{pmatrix}\ar[r,mapsto] & \begin{pmatrix}a & b \\ c & -a\end{pmatrix} \end{tikzcd},$$
this bilinear form is represented by $$
\left\langle
\begin{pmatrix}a \\ b\\ c\end{pmatrix},
\begin{pmatrix}\widetilde{a}\\ \widetilde{b}\\ \widetilde{c}\end{pmatrix}
\right\rangle=
\begin{pmatrix}a & b & c\end{pmatrix}
\begin{pmatrix} 2 & 0 & 0 \\ 0 & 0 & 1\\ 0& 1 & 0\end{pmatrix}
\begin{pmatrix}\widetilde{a}\\ \widetilde{b}\\ \widetilde{c}\end{pmatrix}.$$

The involution on $\mathfrak{sl}_3$ is now simply given by \begin{equation}\begin{tikzcd}\label{inv-sl3}X\in \mathfrak{sl}_3\ar[r, maps to]& -X^{\tau},\end{tikzcd}\end{equation} where $X^{\tau}$ is the adjoint with respect to $\langle .\, ,.\rangle$, i.e. $\langle A,XB\rangle=\langle X^{\tau}A,B\rangle$, or, equivalently,
$$X^{\tau}=\begin{pmatrix} 2 & 0 & 0 \\ 0 & 0 & 1\\ 0& 1 & 0\end{pmatrix}^{-1}X^t\begin{pmatrix} 2 & 0 & 0 \\ 0 & 0 & 1\\ 0& 1 & 0\end{pmatrix},$$ where $X^t$ is the standard matrix transpose of $X$.  We now have
\begin{lemma}The image under $\ad$ of $\mathfrak{sl}_2$ in $\mathfrak{sl}_3$ is invariant under the involution (\ref{inv-sl3}) on $\mathfrak{sl}_3$.
\end{lemma}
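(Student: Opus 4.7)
The plan is to reduce this to the $\ad$-invariance of the bilinear form $\langle \cdot\,, \cdot\rangle$, which is essentially built into the setup.

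First, I would observe that under the identification $\mathbb{C}^3\cong\mathfrak{sl}_2$ given in the excerpt, the form $\langle A,B\rangle$ defined by the matrix $\bigl(\begin{smallmatrix}2 & 0 & 0\\ 0 & 0 & 1\\ 0 & 1 & 0\end{smallmatrix}\bigr)$ is exactly $\tr(AB)$, a non-zero multiple of the Killing form on $\mathfrak{sl}_2$. As such, it satisfies the invariance relation
\begin{equation*}
\langle [A,B],C\rangle + \langle B,[A,C]\rangle = 0 \quad\text{for all } A,B,C\in\mathfrak{sl}_2.
\end{equation*}

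Next, I would reinterpret this invariance in terms of the transpose $\tau$. Viewing $\ad(A)\in\End(\mathfrak{sl}_2)$ as an element $X$ of $\mathfrak{sl}_3$ (tracelessness of $\ad(A)$ follows from $\mathfrak{sl}_2$ being perfect), the identity above reads $\langle XB,C\rangle=-\langle B,XC\rangle$. By the defining property $\langle A,XB\rangle=\langle X^\tau A,B\rangle$ of the transpose, this is equivalent to $X^\tau=-X$, i.e.\
\begin{equation*}
\ad(A)^\tau=-\ad(A).
\end{equation*}

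Finally, I would conclude that for every $X\in\ad(\mathfrak{sl}_2)\subset\mathfrak{sl}_3$ we have $-X^\tau=X$, so $X$ is fixed by the involution \eqref{inv-sl3}. Hence $\ad(\mathfrak{sl}_2)$ lies inside the $+1$-eigenspace of this involution (which is in fact $\mathfrak{so}(\mathbb{C}^3,\langle\cdot\,,\cdot\rangle)$, itself three-dimensional, so equality holds by dimension count).

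I do not anticipate a real obstacle: the statement is a direct consequence of the $\ad$-invariance of the trace form, the only mild check being that $\ad(A)$ is traceless so that it genuinely sits in $\mathfrak{sl}_3$ rather than merely in $\mathfrak{gl}_3$.
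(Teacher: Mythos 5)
Your proof is correct and follows essentially the same route as the paper: both reduce the claim to the identity $\ad(A)^\tau=-\ad(A)$, i.e.\ to the $\ad$-invariance of the trace form, the only difference being that the paper verifies this invariance by hand via cyclicity of the trace while you invoke it as a standard property of the Killing form. Your added remarks (tracelessness of $\ad(A)$, and the dimension count identifying the image with $\mathfrak{so}(\mathbb{C}^3,\langle\cdot\,,\cdot\rangle)$) are correct refinements not spelled out in the paper.
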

\begin{proof}It suffices to observe that, for $A,B,C \in\mathfrak{sl}_2$, we have
$$-\langle \ad(C)A,B\rangle=-\tr([C,A]B)=-\tr(CAB)+\tr(ACB)$$ and
$$\langle A,\ad(C)B\rangle=\tr(A[C,B])=\tr(ACB)-\tr(ABC)=\tr(ACB)-\tr(CAB).$$ Therefore we have
$\langle -\ad(C)A,B\rangle=\langle A, \ad(C)B\rangle,$ hence $\ad(C)=-\ad(C)^{\tau}$.
\end{proof}
We consider now again the morphism $\pi_g:\mathcal{R}_g\rightarrow \mathcal{M}_g$ from Section \ref{dets}. We have on $\mathcal{M}_g$ the 
two vector bundles $\mathbb{V}(\SL_2, 4)$ and $\mathbb{V}(\SL_3, 1)$ of non-abelian theta functions at level four 
(resp. one) on the moduli space of $\SL_2$ (resp. $\SL_3$)-bundles. On $\mathcal{R}_g$ we have the vector bundle 
$\mathbb{V}^{\rm tw}(\SL_3, 1)$ whose fiber over a double étale cover $p: \calCt \rightarrow \calC$ equals the space
$H^0(\mathcal{N}^{+,{\rm ss}}_{\SL_3}, \mathcal{P}_3)$
of non-abelian theta functions at level one on the moduli space $\mathcal{N}^{+,ss}_{\SL_3}$ of (symmetric) anti-invariant $\SL_3$-bundles over $\calCt$ (see \cite{BBMP:2024} for the precise definitions of $\mathcal{N}^{+,{\rm ss}}_{\SL_3}$ and the Pfaffian line bundle $\mathcal{P}_3$).
These are equipped with the Hitchin \cite{BBMP:2020} and Prym-Hitchin \cite[Theorem 5.1.2]{BBMP:2024} flat projective connections.  
The discussion above shows

\begin{proposition}\label{conf-emb-flat}
The two natural maps $E \mapsto \mathrm{End}_0(E)$  and $E \mapsto p^* \mathrm{End}_0(E)$ from $\mathcal{M}_{\SL_2}$ to 
$\mathcal{M}_{\SL_3}$ (resp. $\mathcal{N}^{+,{\rm ss}}_{\SL_3}$) induce the following morphisms of vector bundles 
$$\begin{tikzcd} \mathbb{V}(\SL_3, 1)  \ar[r]& \mathbb{V}(\SL_2, 4) \end{tikzcd} $$ on $\mathcal{M}_g$ and $$\begin{tikzcd} \mathbb{V}^{\rm tw}(\SL_3, 1) \ar[r]&\pi_g^*  \mathbb{V}(\SL_2, 4) \end{tikzcd}$$ on $\mathcal{R}_g$. These two morphisms are projectively flat for the Hitchin (resp. Prym-Hitchin)
connections.
\end{proposition}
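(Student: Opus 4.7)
The plan is to construct both morphisms from extension of structure group along $\mathrm{ad}:\mathfrak{sl}_2\hookrightarrow\mathfrak{sl}_3$, identify them with the natural maps between bundles of (twisted) conformal blocks via Laszlo's theorem, and then invoke the flatness results for conformal embeddings of Belkale and its equivariant extension in \cite{BBMP:2024}.

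First I would exhibit both morphisms at the level of moduli spaces. The adjoint representation $\SL_2\to\SL_3$ has Dynkin index $4$, so extension of structure group gives $\widetilde\phi:\mathcal{M}_{\SL_2}\to\mathcal{M}_{\SL_3}$ satisfying $\widetilde\phi^*\mathcal{L}_{\SL_3}\cong\mathcal{L}_{\SL_2}^4$; pullback of sections fiberwise, globalized over $\mathcal{M}_g$, yields the first morphism $\mathbb{V}(\SL_3,1)\to\mathbb{V}(\SL_2,4)$. For the second, the preceding lemma shows that $\mathrm{ad}(\mathfrak{sl}_2)\subset\mathfrak{sl}_3$ is fixed by the involution \eqref{inv-sl3}, so for an $\SL_2$-bundle $E$ on $\calC$ the pullback $p^*\mathrm{End}_0(E)$ to $\Ctilda$ inherits a canonical lift of $\sigma$ that places it in $\mathcal{N}^{+,\mathrm{ss}}_{\SL_3}$ as a symmetric anti-invariant $\SL_3$-bundle. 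The same Dynkin-index computation carried out equivariantly shows that the Pfaffian $\mathcal{P}_3$ pulls back to $\pi_g^*\mathcal{L}_{\SL_2}^4$, giving the second morphism $\mathbb{V}^{\mathrm{tw}}(\SL_3,1)\to\pi_g^*\mathbb{V}(\SL_2,4)$ on $\mathcal{R}_g$.

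For projective flatness I would invoke the Laszlo-type equivalence between the Hitchin (resp.\ Prym-Hitchin) connection and the WZW connection on bundles of (twisted) conformal blocks: this is \cite{laszlo:1998} in the untwisted case and \cite[Theorem 5.1.2]{BBMP:2024} in the twisted case. Under these identifications, the pullback-of-sections morphisms constructed above correspond precisely to the natural restriction maps of (twisted) conformal blocks induced by $\mathfrak{sl}_2\hookrightarrow\mathfrak{sl}_3$. Since this embedding is conformal (both sides carry central charge $c = 4\cdot 3/(4+2) = 1\cdot 8/(1+3) = 2$), \cite[Proposition 5.8]{belkale:2009} yields projective flatness in the untwisted case, while \cite[Theorem A.4.1]{BBMP:2024}, applied with the $\mathbb{Z}/2\mathbb{Z}$-equivariance established above, gives it in the twisted case.

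The main obstacle is the bookkeeping: one must verify that under the Laszlo isomorphism the \emph{geometric} pullback along $\widetilde\phi$ (respectively its symmetric anti-invariant analogue) coincides with the \emph{representation-theoretic} restriction of conformal blocks associated to the Lie algebra embedding, including compatibility of theta structures on both sides. The untwisted identification is classical, but in the twisted setting one must appeal carefully to the equivariant Laszlo correspondence of \cite{BBMP:2024}, which is precisely set up to make the morphism induced by an equivariant conformal embedding on Pfaffian-theta functions agree with the one arising on twisted conformal blocks.
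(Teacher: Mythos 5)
Your proposal is correct and follows essentially the same route as the paper: the morphisms come from the adjoint embedding (with the anti-invariance lemma placing the image in $\mathcal{N}^{+,{\rm ss}}_{\SL_3}$), and flatness follows from Laszlo's comparison theorem together with \cite[Proposition 5.8]{belkale:2009} in the untwisted case, and from the twisted Laszlo comparison plus \cite[Theorem A.4.1]{BBMP:2024} applied to the $\mathbb{Z}/2\mathbb{Z}$-equivariant conformal embedding in the twisted case. The only minor discrepancy is bibliographic: the twisted Laszlo-type comparison is \cite[Theorem 6.2.3]{BBMP:2024} (with the identification of theta functions and twisted conformal blocks via \cite[Theorem 12.1]{hong.kumar:2018}), not Theorem 5.1.2, which is the construction of the Prym-Hitchin connection itself.
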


\begin{proof}The flatness of the first morphism is a direct application of Laszlo's comparison theorem \cite{laszlo:1998} and \cite[Proposition 5.8]{belkale:2009}.  For the flatness of the second morphism, it suffices to remark that under the identification given by \cite[Theorem 12.1]{hong.kumar:2018} the two spaces of non-abelian
theta functions correspond to the twisted conformal blocks associated to $\mathfrak{sl}_3$ with the involution \eqref{inv-sl3} and to 
$\mathfrak{sl}_2$ with the trivial involution and the corresponding morphism between conformal blocks is induced by the embedding
$\ad : \mathfrak{sl}_2 \hookrightarrow \mathfrak{sl}_3$.
Moreover, the bundle of twisted conformal blocks for $\mathfrak{sl}_2$ with trivial twist on $\mathcal{R}_g$ is exactly the pull-back of the bundle of ordinary conformal blocks for $\mathfrak{sl}_2$ on $\mathcal{M}_g$. We obtain projective flatness of the morphism by combining \cite[Theorem A.4.1]{BBMP:2024}, applied to the $\mathbb{Z}/2 \mathbb{Z}$-equivariant conformal embedding
$\ad : \mathfrak{sl}_2 \hookrightarrow \mathfrak{sl}_3$, with the twisted analogue of Laszlo's comparison 
theorem \cite[Theorem 6.2.3]{BBMP:2024}.
\end{proof}
 
\subsection{(Anti-invariant) level-rank duality}
The second main ingredient to establish the flatness of the abelianization morphism is level-rank duality (also known as strange duality).  Originating in the physics of the WZW model \cite{naculich-schnitzer:1990a,naculich-schnitzer:1990b}, the mathematical formulation of level-rank duality \cite{donagi-tu:1994, beauville:1995} observes that, given a smooth curve $\calC$, there is a natural map, via the tensor product of bundles
$$\begin{tikzcd}H^0(\mathcal{M}_{\GL_{kr}}^*,\Theta) \ar[r]& H^0(\mathcal{M}_{\SL_r},\LL_{\SL_r}^k)\otimes H^0(\mathcal{M}_{\GL_k}^*,\Theta^r)\end{tikzcd}$$ (here $\mathcal{M}_{\GL_k}^*$ is the moduli space of rank-$k$ bundles with degree $k(g-1)$, which comes equipped with a canonical theta divisor $\Theta$).  Since $h^0(\mathcal{M}_{\GL_{kr}}^*,\Theta)=1$, this gives a natural map, canonically defined up to scalars
$$\begin{tikzcd}H^0(\mathcal{M}_{\GL_k}^*,\Theta^r)^\vee \ar[r]& H^0(\mathcal{M}_{\SL_r},\LL_{\SL_r}^k).\end{tikzcd}$$  
The level-rank duality conjecture, which asserts that this is an isomorphism, was proven by Marian and Oprea \cite{marian.oprea:2007} and Belkale \cite{belkale:2008,belkale:2009}.  Belkale in fact showed that, when considering this morphism for families $\calC\rightarrow S$ of smooth curves, the resulting morphism of bundles preserves the flat projective connections on both sides.  In the case of $k=1$ (the only case we will need), this in fact was already implicitly shown in \cite{beauville.narasimhan.ramanan:1989} in terms of the action of the theta group.  

An anti-invariant version of level-rank duality, for unramified double covers of smooth curves $\calCt\rightarrow \calC$ with involution $\sigma$, was recently proposed by the authors in \cite{BBMP:2024}.  The role of $\mathcal{M}_{\SL_r}$ is now played by the moduli space 
$\mathcal{N}^{+,{\rm ss}}_{\SL_r}$ of bundles $E$ on $\calCt$ with trivial determinant and further equipped with an isomorphism 
$\sigma^*E\cong E^\vee$.  The analogue of $\mathcal{M}_{\GL_r}^*$ is the moduli space of bundles $E$ on $\calCt$ with an isomorphism $\sigma^*E\cong E^\vee \otimes K_{\calCt}$ --- for $r=1$, this is a torsor over the corresponding Prym variety\footnote{The statements in \cite{oxbury.pauly:1996} do not involve this torsor, but just work with the Prym varieties themselves.  This difference is inconsequential for a single curve, but crucial when considering families.} for the cover $\calCt\rightarrow \calC$.
It was shown to be an isomorphism for $k=1$, and to be flat for all levels (note that the ramified case, for $k=1$, was also recently established by Zelaci in \cite{zelaci:2025}).  In particular we have \cite[Corollary 7.3.2]{BBMP:2024} that on $\mathcal{R}_g$, the morphism 
$$\begin{tikzcd}(\pi_{P*} \Xi^r)^\vee  \ar[r]& \mathbb{V}^{\rm tw}(\SL_r,1)\end{tikzcd}$$
is a projectively flat isomorphism for any rank $r$.

Combining these facts  with Proposition \ref{conf-emb-flat} 
leads to
\begin{proposition}\label{compositemorph}
The two composite morphisms
\begin{equation}
\label{flatcombined}
\begin{tikzcd}
\left(\pi_{J*}\Theta^3\right)^{\vee}\ar[r] &  \mathbb{V}(\SL_3, 1) \ar[r] & \mathbb{V}(\SL_2, 4) \\
\left(\pi_{P*}\Xi^3\right)^{\vee} \ar[r] & \mathbb{V}^{\rm tw}(\SL_3, 1)  \ar[r] & \pi_g^* \mathbb{V}(\SL_2, 4) 
\end{tikzcd}
\end{equation}
are projectively flat over $\mathcal{M}_g$ and $\mathcal{R}_g$ respectively. 
\end{proposition}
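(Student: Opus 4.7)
The plan is to exhibit each composite in (\ref{flatcombined}) as a composition of two morphisms that are \emph{separately} known to be projectively flat. In each row, the first arrow is a (classical or anti-invariant) level-rank duality morphism, and the second arrow is the one induced by the $\mathbb{Z}/2\mathbb{Z}$-equivariant conformal embedding $\ad:\mathfrak{sl}_2\hookrightarrow\mathfrak{sl}_3$. The proposition is then immediate from the observation that the composition of two projectively flat morphisms is projectively flat (directly from the local definition, or, equivalently, because projective flatness of $\Phi$ means that $\nabla^{F\otimes E^*}\Phi$ is a scalar one-form).

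For the top row, I would identify the first arrow with classical level-rank duality at $r=3$, $k=1$. Since $h^0(\mathcal{M}^*_{\GL_3},\Theta)=1$, the tensor product of sections yields a canonical (up to scalars) morphism $(\pi_{J*}\Theta^3)^\vee \to \mathbb{V}(\SL_3,1)$, whose projective flatness over $\mathcal{M}_g$ is a special case of the main result of Belkale \cite{belkale:2009}; in this $k=1$ setting it is in fact already implicit in the theta-group analysis of \cite{beauville.narasimhan.ramanan:1989}. The second arrow $\mathbb{V}(\SL_3,1)\to\mathbb{V}(\SL_2,4)$, induced by $E\mapsto \End_0(E)$, is exactly the projectively flat morphism produced by the first assertion of Proposition \ref{conf-emb-flat}.

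For the bottom row, I would invoke the anti-invariant version of level-rank duality at rank $3$, level $1$, established in \cite[Corollary 7.3.2]{BBMP:2024} as a projectively flat isomorphism $(\pi_{P*}\Xi^3)^\vee \to \mathbb{V}^{\rm tw}(\SL_3,1)$ over $\mathcal{R}_g$. The second arrow $\mathbb{V}^{\rm tw}(\SL_3,1)\to \pi_g^*\mathbb{V}(\SL_2,4)$, induced by $E\mapsto p^*\End_0(E)$, is projectively flat by the second assertion of Proposition \ref{conf-emb-flat}. Composing, one obtains the claimed projectively flat morphism on $\mathcal{R}_g$.

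There is essentially no obstacle to overcome at this stage: the serious content has already been absorbed into the two inputs, namely the anti-invariant level-rank duality of \cite{BBMP:2024} and the $\mathbb{Z}/2\mathbb{Z}$-equivariant conformal embedding analysis underlying Proposition \ref{conf-emb-flat}. The only matter that requires any attention is the compatibility of the identifications on both sides, i.e.\ that the target of the first arrow really is the source of the second arrow as bundles with projective connection; but this is a direct comparison, since in both rows both intermediate bundles are the natural Verlinde (respectively twisted Verlinde) bundle $\mathbb{V}(\SL_3,1)$ (respectively $\mathbb{V}^{\rm tw}(\SL_3,1)$) equipped with the Hitchin (respectively Prym-Hitchin) projective connection, and Laszlo-type comparison theorems identify these intrinsically with their conformal-blocks counterparts.
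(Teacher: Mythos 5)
Your proposal is correct and follows essentially the same route as the paper: the paper obtains Proposition \ref{compositemorph} precisely by combining the flatness of classical level-rank duality at $k=1$ (Belkale, with the theta-group argument of \cite{beauville.narasimhan.ramanan:1989} for this case) and of anti-invariant level-rank duality \cite[Corollary 7.3.2]{BBMP:2024} with the projectively flat conformal-embedding morphisms of Proposition \ref{conf-emb-flat}, and then composing. The paper in fact gives no further argument beyond this combination, so your write-up, including the remark that a composition of projectively flat morphisms is projectively flat, is a faithful elaboration of the intended proof.
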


\section{Abelianization and finiteness of the monodromy}\label{finiteness}
\subsection{The Mumford-Welters connection on the theta bundle} \label{mumfordweltersconnection}
In this subsection we recall the main results on theta bundles 
associated to a family $\pi : \mathcal{A} \rightarrow S$ of abelian varieties.
Given a relatively ample line bundle $\mathcal{L}$ over $\mathcal{A}$ we recall that there exists a projective flat 
connection $\nabla$, called the Mumford-Welters connection, on the theta bundle $\pi_* \mathcal{L}$ over $S$. The standard references are \cite[\S 6]{mumford:1966} and \cite[Proposition 2.7]{welters:1983}.

In this paper we only consider line bundles of the form $\mathcal{L} =\Theta^n$, where $\Theta$ is a principal polarization and $n \in \mathbb{N}^*$ (actually we only need the case $n=3$). We consider the Heisenberg group $\mathcal{G}(n)$, defined as a certain non-abelian central extension
$$ \begin{tikzcd}0 \ar[r] &\mathbb{G}_m \ar[r]& \mathcal{G}(n) \ar[r]& H(n) \ar[r]& 0,\end{tikzcd} $$
of the abelian group  $H(n) = \left( \mathbb{Z} / n \mathbb{Z} \right)^{g} \times \left( \mathbb{Z} / n \mathbb{Z} \right)^{g}$ equipped with the standard symplectic form. An essential notion underlying the Mumford-Welters connection is the notion of \emph{theta structure}, 
i.e. an isomorphism inducing the identity on the central $\mathbb{G}_m$ between the two groups 
$$\begin{tikzcd} \mathcal{G}(\mathcal{L}_s) \ar[r,"\sim"] & \mathcal{G}(n),\end{tikzcd}$$
where $\mathcal{G}(\mathcal{L}_s)$ is the Mumford group associated to  $(\mathcal{A}_s, \mathcal{L}_s)$. We recall that
fixing a theta structure for $(\mathcal{A}_s, \mathcal{L}_s)$ allows to canonically (up to homotheties) identify the 
space of global sections
$$\begin{tikzcd}H^0(\mathcal{A}_s, \mathcal{L}_s)\ar[r,"\sim"] & V\end{tikzcd}$$
with the Schrödinger representation $V$ of $\mathcal{G}(n)$, the unique irreducible representation 
of $\mathcal{G}(n)$ of level $1$. In terms of families of abelian varieties, this implies that on the 
étale cover $p : \widetilde{S} \rightarrow S$ parameterizing
abelian varieties of the family $S$ together with a theta structure, the projectivized bundle 
$p^* \mathbb{P}(\pi_* \mathcal{L}) = \mathbb{P}(p^*  \pi_* \mathcal{L}) = \mathbb{P} (V\otimes \mathcal{O}_{\widetilde{S}})$ is the trivial projective
bundle --- see \cite[page 81]{mumford:1966}. This implies that $\pi_* \mathcal{L}$ is equipped with a flat projective
connection, whose monodromy is contained in the image of the group $\mathrm{Aut}^1(\mathcal{G}(n))$ of 
automorphisms of the Heisenberg group $\mathcal{G}(n)$ acting as the identity on the central subgroup $\mathbb{G}_m$. Note that 
$\mathrm{Aut}^1(\mathcal{G}(n))$ fits into the exact sequence
$$\begin{tikzcd} 0 \ar[r]& H(n) \ar[r]& \mathrm{Aut}^1(\mathcal{G}(n)) \ar[r] &
\mathrm{Aut}(H(n)) = \mathrm{Sp}(2g, \mathbb{Z}/n\mathbb{Z}) \ar[r]& 1\end{tikzcd}$$
and that there is a natural map 
$$\begin{tikzcd} \mathrm{Aut}^1(\mathcal{G}(n)) \ar[r]& \mathbb{P}\mathrm{GL}(V)\end{tikzcd}$$
extending the projective Schrödinger representation $H(n) \subset  \mathbb{P}\mathrm{GL}(V)$. Since
$ \mathrm{Aut}^1(\mathcal{G}(n))$ is a finite group, we obtain that the Mumford-Welters connection has finite
monodromy.

\subsection{Sums of theta bundles}
We first need a general fact about the direct sum of two projective connections.
\begin{proposition} \label{projconndirectsum}
Let $E_1,E_2$ be two vector bundles of rank $r_1,r_2$ equipped with two flat projective connections $\nabla_1, \nabla_2$. Assume that the
following relation holds
$$ (\det E_1)^{\otimes r_2} = (\det E_2)^{\otimes r_1}.$$
Then 
\begin{enumerate}[label=(\roman*)]
\item \label{firstsum} there exists a flat projective connection $\nabla$ on the direct sum $E = E_1 \oplus E_2$ preserving 
each subbundle $\mathbb{P}(E_i) \subset \mathbb{P}(E)$ and such that $\nabla_{|\mathbb{P}(E_i)} = \nabla_i$ for $i = 1,2$.
\item \label{secondsum} if both projective connections $\nabla_1$ and $\nabla_2$ have finite monodromy, then $\nabla$ also has finite
monodromy.
\end{enumerate}
\end{proposition}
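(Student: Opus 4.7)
The strategy is to realize both projective connections as honest twisted connections sharing a common twist line bundle and weight, take their direct sum using Lemma \ref{sum}, and then descend the result to a projective connection.

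For part \ref{firstsum}, I would first apply Lemma \ref{projistwisted} to lift each $\nabla_i$ to a canonical $\det(E_i)$-flat connection $\widehat{\nabla}_i$ of weight $1/r_i$. By Lemma \ref{scale}, passing from $\det(E_i)$ to $(\det E_i)^{\otimes r_{3-i}}$ rescales $\widehat{\nabla}_i$ to a $(\det E_i)^{\otimes r_{3-i}}$-flat connection $\widetilde{\nabla}_i$ of weight $1/(r_1 r_2)$. The hypothesis furnishes an isomorphism $\lambda := (\det E_1)^{\otimes r_2} \cong (\det E_2)^{\otimes r_1}$, so both $\widetilde{\nabla}_i$ are $\lambda$-flat of the same weight, and Lemma \ref{sum} produces a $\lambda$-flat connection $\widetilde{\nabla}$ on $E = E_1 \oplus E_2$. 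Since $\widetilde{\nabla}$ is block-diagonal by construction, its underlying projective connection $\nabla$ preserves each subbundle $\mathbb{P}(E_i) \subset \mathbb{P}(E)$, and unraveling the two rescalings shows that its restriction to $\mathbb{P}(E_i)$ recovers $\nabla_i$.

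For part \ref{secondsum}, I would proceed by reduction to the trivial case. Using Lemma \ref{finitebecomestrivial} for each $\nabla_i$ and taking the fiber product of the resulting étale covers of $X$, I may assume that both $\nabla_1$ and $\nabla_2$ are already trivial. By Lemma \ref{trivialmonodromy}, we then have $E_i \cong L_i^{\oplus r_i}$ with $\nabla_i$ induced by the diagonal $L_i$-flat connection of weight $1$. The determinant relation becomes $L_1^{\otimes r_1 r_2} \cong L_2^{\otimes r_1 r_2}$, i.e.\ $L_1 \otimes L_2^{-1}$ is $r_1 r_2$-torsion, and a further finite étale cover (the $\mu_{r_1 r_2}$-torsor associated to this torsion line bundle) trivializes it, making $L_1 \cong L_2 =: L$ and $E \cong L^{\oplus (r_1+r_2)}$. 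Unraveling the rescalings from part \ref{firstsum} shows that on this cover $\nabla$ coincides with the trivial projective connection on $L^{\oplus(r_1+r_2)}$ induced by the diagonal $L$-flat connection of weight $1$; another application of Lemma \ref{finitebecomestrivial} then yields finite monodromy of $\nabla$.

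The main obstacle is bookkeeping: one must verify that the canonical $\det(E_i)$-twisted lift of the trivial projective connection on $L_i^{\oplus r_i}$ agrees with the diagonal $L_i$-flat connection of weight $1$ under the identification $\mathcal{A}(L_i^{\otimes r_i}) \cong \mathcal{A}(L_i)$ from Lemma \ref{scale}, and that the direct-sum construction of Lemma \ref{sum} then specializes to the diagonal $L$-flat connection on $L^{\oplus(r_1+r_2)}$. Once these compatibilities are in place, both parts of the proposition follow as sketched.
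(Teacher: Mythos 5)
Your proposal is correct and follows essentially the same route as the paper: part \ref{firstsum} via Lemmas \ref{projistwisted}, \ref{scale} and \ref{sum} to equip both summands with $\lambda$-twisted connections of common weight $\frac{1}{r_1r_2}$, and part \ref{secondsum} via the fiber product of trivializing covers, Lemma \ref{trivialmonodromy}, and the finite étale cover killing the $r_1r_2$-torsion line bundle $L_2\otimes L_1^{-1}$ before concluding with Lemma \ref{finitebecomestrivial}. The compatibility bookkeeping you flag at the end is likewise left implicit in the paper's own argument.
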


\begin{proof}
\ref{firstsum} According to Lemma \ref{projistwisted}, a projective connection $\nabla_i$ on $E_i$ is equivalent
to a $\det (E_i)$-twisted connection on $E_i$ of weight $\frac{1}{r_i}$. But, by Lemma \ref{scale}, a $\det(E_1)$-twisted 
connection on $E_1$ of weight $\frac{1}{r_1}$ is equivalent to a $\det(E_1)^{\otimes r_2}$-twisted connection on $E_1$
of weight $\frac{1}{r_1r_2}$. If we denote $\lambda = (\det E_1)^{\otimes r_2} = (\det E_2)^{\otimes r_1}$ we obtain that
$E_1$ and $E_2$ are equipped with $\lambda$-twisted connections of weight $\frac{1}{r_1r_2}$. Then we apply
Lemma \ref{sum}.

\ref{secondsum} Let us denote by $G_i \subset \mathbb{P}\mathrm{GL}(r_i)$ the finite subgroups corresponding to the
images $\mathrm{im} \ \rho_i$. Then there exist finite \'etale Galois covers $\pi_i : X_i \rightarrow X$ with
Galois group $\mathrm{Gal}(X_i/X) = G_i$ such that the pull-back $(\pi_i^* E_i, \pi_i^* \nabla_i)$ is a vector bundle with a 
projective connection having trivial monodromy. Consider the fiber product
$$ \begin{tikzcd}\pi : Y = X_1 \times_X X_2 \ar[r]& X.\end{tikzcd} $$
This is a finite \'etale cover (not necessarily Galois) with the property that $(\pi^* E_i, \pi^* \nabla_i)$ has trivial
monodromy for $i=1,2$. We now apply Lemma \ref{trivialmonodromy} to the two vector bundles $\pi^* E_1$ and $\pi^* E_2$ :
there exists line bundles $L_1, L_2$ such that
$$ \pi^* E_1 = L_1^{\oplus r_1} \qquad \text{and} \qquad \pi^*E_2 = L_2^{\oplus r_2}$$
and we have the relation $(\pi^* \det E_1)^{\otimes r_2} =  (\pi^* \det E_2)^{\otimes r_1}$, which is equivalent
to $L_1^{\otimes r_1 r_2} = L_2^{\otimes r_1 r_2}$. Now we introduce the line bundle $\Lambda = L_2 L_1^{-1}$. Then
$\Lambda^{\otimes r_1r_2} = \mathcal{O}_Y$ and we consider the finite \'etale cover 
$\tilde{\pi} : \tilde{Y} \rightarrow Y$ of degree $r_1r_2$ corresponding to the $r_1r_2$-torsion line bundle $\Lambda$.
Then, by construction, $\tilde{\pi}^* \Lambda = \mathcal{O}_{\tilde{Y}}$. Hence, if we denote by 
$\alpha : \tilde{Y} \rightarrow X$ the composite map $\alpha = \pi \circ \tilde{\pi}$ we obtain that
$$ \alpha^* E_1 = L^{\oplus r_1} \qquad \text{and} \qquad \alpha^* E_2 = L^{\oplus r_2} $$
with $L = \tilde{\pi}^* L_1 =  \tilde{\pi}^* L_2$. Moreover the direct sum of the two trivial projective connections on $L^{\oplus r_1}$ and $L^{\oplus r_2}$ gives the trivial projective connection on $L^{\oplus r_1 + r_2} = \alpha^*( E_1 \oplus E_2)$.
Thus we obtain that the pull-back $\alpha^* \nabla$ is the trivial projective connection and we conclude by Lemma 
\ref{finitebecomestrivial} that
$\nabla$ has finite monodromy.
\end{proof}

We can now consider the flat projective Mumford-Welters connections (see subsection \ref{mumfordweltersconnection}) on the theta bundles $\pi_{J*}\Theta^k$ and $\pi_{P*}\Xi^k$.
We get
\begin{corollary}\label{combined-con}
The projective Mumford-Welters connections add up to a single flat projective connection on the direct sum over $\mathcal{M}_g$
\begin{equation}\label{thebundle}\pi_{J*}\Theta^k\oplus \pi_{g*}\left(\pi_{P*}\Xi^k\right).\end{equation}
This projective connection has finite monodromy.  Moreover, the morphisms (\ref{flatcombined}) from Proposition \ref{compositemorph} induce a projectively flat
morphism over $\mathcal{M}_g$
\begin{equation}\label{mainiso}\begin{tikzcd}
\left(\pi_{J*}\Theta^3\right)^{\vee} \oplus \pi_{g*} \left(\pi_{P*}\Xi^3\right)^{\vee} \ar[r]&  \mathbb{V}(\SL_2, 4). \end{tikzcd}
\end{equation}
\end{corollary}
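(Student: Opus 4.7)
The plan is to proceed in three stages: first rigidify the projective Mumford–Welters connections to $\lambda$-twisted connections with matching weights, then use the descent machinery of Section~\ref{connections} to combine them, and finally assemble the morphism from Proposition~\ref{compositemorph}.

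The first step is to calibrate weights. By Lemma~\ref{projistwisted}, the projective Mumford–Welters connection on $\pi_{J*}\Theta^k$ lifts to a $\det(\pi_{J*}\Theta^k)$-twisted connection of weight $1/k^g$. Proposition~\ref{slope} identifies $\det(\pi_{J*}\Theta^k) \cong \lambda^{\otimes k^g(k-1)/2}$, so Lemma~\ref{scale} converts this into a $\lambda$-flat connection of weight $(k-1)/2$ on $\mathcal{M}_g$. The same computation, performed on $\mathcal{R}_g$ with $\pi_g^*\lambda$ playing the role of Hodge bundle (using that $\det(\pi_{P*}\Xi^k) \cong (\pi_g^*\lambda)^{\otimes k^{g-1}(k-1)/2}$ and rank $k^{g-1}$), yields a $\pi_g^*\lambda$-flat connection on $\pi_{P*}\Xi^k$ of exactly the same weight $(k-1)/2$. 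Now Lemma~\ref{descend} applied to the finite \'etale morphism $\pi_g\colon \mathcal{R}_g \to \mathcal{M}_g$ pushes the latter down to a $\lambda$-flat connection of weight $(k-1)/2$ on $\pi_{g*}(\pi_{P*}\Xi^k)$. Since both summands now carry $\lambda$-flat connections with identical weight, Lemma~\ref{sum} produces a canonical $\lambda$-flat connection on the direct sum, whose induced projective connection is the one claimed.

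For finiteness of monodromy, the classical theta-group argument recalled in Subsection~\ref{mumfordweltersconnection} shows that the projective Mumford–Welters connection on each of $\pi_{J*}\Theta^k$ and $\pi_{P*}\Xi^k$ has finite monodromy. Pushforward under the finite \'etale map $\pi_g$ preserves this property: by Lemma~\ref{finitebecomestrivial} there is a finite \'etale cover trivializing the projective connection on $\pi_{P*}\Xi^k$, and composing with $\pi_g$ gives a finite \'etale cover of $\mathcal{M}_g$ on which $\pi_g^*(\pi_{g*}(\pi_{P*}\Xi^k))$ splits as a direct sum of copies of pullbacks of $\pi_{P*}\Xi^k$ and hence becomes projectively trivial. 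The slope equality $(\det E_1)^{\otimes r_2} = (\det E_2)^{\otimes r_1}$ required for Proposition~\ref{projconndirectsum}\ref{secondsum} follows from the fact that both bundles support $\lambda$-flat connections of the same weight (this is the integral upgrade, through the isomorphism of Atiyah sequences of Lemma~\ref{scale}, of the rational Chern-class identity noted after the proof of Lemma~\ref{scale}). Proposition~\ref{projconndirectsum}\ref{secondsum} then delivers the desired finite monodromy on the direct sum.

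For the final statement, the morphism $(\pi_{J*}\Theta^3)^\vee \to \mathbb{V}(\SL_2,4)$ is already defined on $\mathcal{M}_g$ and is projectively flat by Proposition~\ref{compositemorph}. The second morphism of Proposition~\ref{compositemorph}, living on $\mathcal{R}_g$, is projectively flat for the pullback of the Hitchin connection; pushing it forward by the finite \'etale $\pi_g$ and composing with the canonical splitting $\pi_{g*}\pi_g^*\mathbb{V}(\SL_2,4) \twoheadrightarrow \mathbb{V}(\SL_2,4)$ (obtained from $\pi_{g*}\mathcal{O}_{\mathcal{R}_g} = \mathcal{O}_{\mathcal{M}_g}\oplus \mathcal{F}$ as in the proof of Lemma~\ref{descend}) produces a morphism $\pi_{g*}(\pi_{P*}\Xi^3)^\vee \to \mathbb{V}(\SL_2,4)$ on $\mathcal{M}_g$. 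Projective flatness is preserved under the pushforward (which is how the descended connection was constructed in the first place) and under the composition with the canonical projection. Adding the two morphisms via Lemma~\ref{sum} yields \eqref{mainiso}. The main obstacle I anticipate is bookkeeping: ensuring that the weight calibrations, the descent of the connection, and the descent of the morphism are all made compatibly, so that projective flatness is genuinely preserved at every step rather than only up to a twist; the key technical tool making this work uniformly is the rigidification to $\lambda$-flat connections of a single fixed weight on $\mathcal{M}_g$.
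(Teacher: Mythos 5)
Your proposal is correct and follows essentially the same route as the paper: descend the Mumford--Welters connection along the finite \'etale map $\pi_g$ via Lemma~\ref{descend}, use the determinant and rank formulas of Proposition~\ref{slope} to combine the two summands (your explicit calibration to a $\lambda$-flat connection of weight $\tfrac{k-1}{2}$ on each side is precisely what the paper's appeal to Proposition~\ref{projconndirectsum} via Lemmas~\ref{projistwisted} and~\ref{scale} amounts to), invoke Proposition~\ref{projconndirectsum}\ref{secondsum} for finiteness, and deduce flatness of \eqref{mainiso} from Proposition~\ref{compositemorph}. You merely unpack the weight bookkeeping and the $\pi_{g*}$-adjunction step that the paper leaves implicit.
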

\begin{proof} We begin by applying Lemma \ref{descend} to $\pi_g$, to obtain the connection on $\pi_{g*}\left(\pi_{P*}\Xi^k\right)$.
Proposition \ref{slope} gives us the equality on the determinants and ranks of the theta bundles, which enables us to apply Proposition \ref{projconndirectsum}, so that we obtain a single flat projective connection with finite monodromy on the bundle (\ref{thebundle}).
The flatness of (\ref{mainiso}) now follows from Proposition \ref{compositemorph}.
\end{proof}

\subsection{Parity} It was shown in \cite{oxbury.pauly:1996,pauly.ramanan:2001} that the morphism (\ref{mainiso}) becomes an isomorphism when restricted to the theta functions that are invariant for the involution which over the abelian torsors is given by
\begin{equation}\label{serreinv} \begin{tikzcd}\lambda \ar[r, maps to]& \lambda^{-1} \otimes K\end{tikzcd}\end{equation} 
on the Jacobian and Prym torsors.  We briefly show here that the corresponding subbundles of $\left(\pi_{J*}\Theta^3\right)^{\vee}\oplus \pi_{g*}\left(\pi_{P*}\Xi^3\right)^{\vee}$ are preserved by the projective flat connection.

All of the connections used by us can be understood to arise through a \emph{heat operator}.  We will use the approach to this introduced in \cite[\S 2.3]{vangeemen.dejong:1998}; see also \cite[\S 3.2--3.4]{BBMP:2020} (as this is the only section of this paper in which we need these notions, we just refer to the latter for background and notations).  In particular, let $\pi:\mathcal{M}\rightarrow S$ be a smooth surjective morphism of smooth schemes, and $L\rightarrow \mathcal{M}$ a line bundle such that $\pi_*L$ is locally free.  Assume we further have a projective heat operator $D:T_S\rightarrow (\pi_*\mathcal{W}_{\mathcal{M}/S}(L)\big/ \mathcal{O}_S$, with symbol $\rho:T_S\rightarrow \pi_*\operatorname{Sym}^2T_{\mathcal{M}/S}$.  

\begin{proposition}\label{isotypical}
    If a finite abelian group $\Gamma$ acts fiberwise on $\mathcal{M}$, and this action is linearised on $L$, then if the symbol $\rho$ is invariant under $\Gamma$, the projective connection induced by the heat operator $D$ will preserve the decomposition of $\pi_*L$ into isotypical components $$\pi_*L=\bigoplus_{\chi \in \widehat{\Gamma}} (\pi_*L)_{\chi},$$
    where $\widehat{\Gamma}$ is the character group of $\Gamma$.
\end{proposition}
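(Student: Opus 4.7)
My plan is to show that each $\gamma \in \Gamma$ acts on $\pi_*L$ by a projectively flat automorphism, from which preservation of the isotypical decomposition follows by an easy eigenvector computation.

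First, working locally on $S$, I would write the connection as $\nabla_X = \partial_X - D(X)$ on sections of $\pi_*L$, where $\partial_X$ is a first-order operator coming from a local ($\Gamma$-equivariant) trivialization, and $D(X)$ is a local lift of the projective heat operator to a genuine second-order differential operator on $L$ along the fibers. Since $\Gamma$ acts over the identity on $S$, the endomorphism $\gamma \in \mathrm{End}(\pi_*L)$ commutes with $\partial_X$ in such a trivialization, so $[\gamma, \nabla_X] = -[\gamma, D(X)] = \gamma\bigl(\gamma^{-1} D(X) \gamma - D(X)\bigr)$.

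Next, I would analyze $\gamma^{-1} D(X) \gamma - D(X)$. Its symbol is $\gamma^{-1}_*\rho(X) - \rho(X)$, which vanishes by the $\Gamma$-invariance hypothesis; hence $\gamma^{-1} D(X) \gamma$ is another lift of a projective heat operator with the same symbol $\rho(X)$ as $D(X)$. In the van Geemen--de Jong framework (see \cite[\S 3.2--3.4]{BBMP:2020}), the projective heat operators arising from the canonical geometric constructions relevant to us are determined by their symbols modulo the ideal $\mathcal{O}_S$ of scalars on the base; consequently $\gamma^{-1} D(X) \gamma \equiv D(X) \pmod{\mathcal{O}_S}$, so $[\gamma, \nabla_X]$ is multiplication by a local function $\omega_\gamma(X)$ on $S$. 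This is precisely the projective flatness of $\gamma$.

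The preservation of the isotypical decomposition then drops out: for $s$ a local section of $(\pi_*L)_\chi$, projective flatness gives
\[ \gamma \cdot \nabla_X s \;=\; \nabla_X(\gamma \cdot s) \,-\, \omega_\gamma(X)\, \gamma \cdot s \;=\; \chi(\gamma)\bigl(\nabla_X s - \omega_\gamma(X) s\bigr), \]
so $\nabla_X s - \omega_\gamma(X) s$ is a $\chi$-eigenvector of every $\gamma$, and hence lies in $(\pi_*L)_\chi$. Modulo the scalar ambiguity inherent in a projective connection, $\nabla_X s$ itself lies in $(\pi_*L)_\chi$, as desired.

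The hard part will be the canonicity statement in the middle step: one needs that a projective heat operator is determined modulo $\mathcal{O}_S$ by its symbol. For the Hitchin, Mumford--Welters, and Prym--Hitchin heat operators used in this paper, this is built into their construction from the underlying geometry, and the $\Gamma$-equivariance of that construction makes the corresponding $D$ itself $\Gamma$-invariant. In greater generality, a safer substitute is to replace $D(X)$ by the average $|\Gamma|^{-1}\sum_{\gamma \in \Gamma} \gamma D(X) \gamma^{-1}$, which is $\Gamma$-invariant with the same symbol; one then only needs to verify that the averaged operator defines the same projective connection as $D(X)$.
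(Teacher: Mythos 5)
Your proposal is correct and follows essentially the same route as the paper: the key step in both is that $\Gamma$-invariance of the symbol forces $\Gamma$-invariance of the projective heat operator, via the uniqueness of a heat operator realising a given symbol (\cite[Theorem 3.4.1]{BBMP:2020}), so that the connection commutes projectively with the $\Gamma$-action. The only (cosmetic) difference is in the endgame: the paper reduces to the invariant subbundle and iterates over character twists of the linearization, whereas you extract the conclusion by a direct eigenvector computation.
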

\begin{proof}It suffices to show that the invariant subbundle $\left(\pi_{\ast}L\right)^{\Gamma}$ is preserved, the rest of the statement then follows after iterating over the changes of the linearization of the action of $\Gamma$ on $L$ by all possible characters.  This follows from the invariance of the projective heat operator that gives rise to the connection under the group action.  Given the invariance of the symbol, the invariance of the heat operator in turn is a consequence of its uniqueness in
realising the candidate symbol \cite[Theorem 3.4.1]{BBMP:2020}. Indeed, pulling back the heat operator under the map induced by any element of the group would give a new heat operator ---  if the symbol is invariant, it has to coincide with the given heat operator.
\end{proof}

We can now apply Proposition \ref{isotypical} to the family of abelian torsors $\pi_J: J^*\rightarrow \mathcal{M}_g$ and $\pi_P: P^{\rm even}\rightarrow \mathcal{R}_g$, where we consider the action of $\mathbb{Z}/2\mathbb{Z}$ generated by the involution (\ref{serreinv}) (using the relative canonical bundle for the family of curves).  In the case of abelian schemes, the projective Mumford-Welters connection was developed in the context we are using (by specifying a candidate symbol, and showing it satisfies the cohomological conditions needed to ensure it arises through a heat operator) in \cite[\S 2.3.8]{vangeemen.dejong:1998}, translating the arguments of \cite{welters:1983} from the deformation-theoretic description of connections.  The proof of this carries over essentially entirely to the case of abelian torsors that is of relevance for us here.  We just summarise the reasoning here for completeness.  

We need to verify that conditions (a), (b) and (c) of \cite[Theorem 3.4.1]{BBMP:2020} hold.  Condition (c) holds simply because the family of abelian torsors is proper, and condition (b) holds for families of abelian varieties \cite[p. 172]{oort-steenbrink:1980}, and hence also for torsors over them, as these are \'etale-locally isomorphic.  Since the relative canonical bundle for a family of abelian torsors is trivial, by \cite[Proposition 3.6.1]{BBMP:2020}, the morphism $\mu_L:\pi_{\ast}\operatorname{Sym}^2T_{\mathcal{M}/S}\rightarrow R^1\pi_{\ast}T_{\mathcal{M}/S}$ is just given by cupping with the relative Atiyah class of $L$ (cfr. \cite[Lemma 1.16 and (1.20)]{welters:1983}), and is an isomorphism \cite[\S 2.1]{welters:1983}.  Hence there is a unique symbol map $\rho_{\rm MW}$ that solves condition (a), given by $$\begin{tikzcd}\rho_{\rm MW}=-\mu_L^{-1}\circ \kappa_{\mathcal{M}/S}:T_S\ar[r]& \pi_*\operatorname{Sym}^2T_{\mathcal{M}/S}.\end{tikzcd}$$  This is the symbol map that gives rise to the Mumford-Welters connection.

In particular, we have 
\begin{lemma} The symbol $\rho_{\rm MW}$ for the Mumford-Welters connection is preserved by the involution (\ref{serreinv}).
\end{lemma}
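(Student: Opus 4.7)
The plan is to show that the involution acts as the identity on the sheaf $\pi_*\mathrm{Sym}^2 T_{\mathcal{M}/S}$ in which the symbol lives, so that \emph{every} section (and not just $\rho_{\rm MW}$) is automatically invariant.

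First I would reduce $\iota$ to a familiar fiberwise automorphism. Étale-locally on the base, any torsor over an abelian variety becomes trivialised by the choice of a theta characteristic $\kappa$; on a fiber of $\pi_J$ (respectively $\pi_P$), picking such a $\kappa$ identifies the torsor with the Jacobian (respectively the Prym variety), and under this identification the map $\lambda \mapsto \lambda^{-1}\otimes K$ becomes the standard inversion $a \mapsto -a$. This is an intrinsic fact about the fiberwise action of $\iota$ and does not depend on the auxiliary choice of $\kappa$.

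Next I would compute the induced action on tangent spaces. For any abelian variety $A$ with $V:=T_0 A$, the relative tangent bundle $T_{A/\mathrm{pt}}=V\otimes \mathcal{O}_A$ is canonically trivialised by left-invariant vector fields, and $[-1]_A$ acts as multiplication by $-1$ on $V$. Consequently $[-1]_A$ acts on $\mathrm{Sym}^2 T_{A/\mathrm{pt}}=\mathrm{Sym}^2 V\otimes \mathcal{O}_A$ by $(-1)^2=+1$. Running this over the family, $\iota$ acts trivially on $\mathrm{Sym}^2 T_{\mathcal{M}/S}$ when restricted to each fiber. Since $\mathrm{Sym}^2 T_{\mathcal{M}/S}$ is fiberwise trivial, sections of $\pi_*\mathrm{Sym}^2 T_{\mathcal{M}/S}$ are determined by their fiberwise values, so $\iota$ acts as the identity on the sheaf $\pi_*\mathrm{Sym}^2 T_{\mathcal{M}/S}$ itself. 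Therefore $\rho_{\rm MW}$, being one such section, is fixed by $\iota$.

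The only mildly delicate point is the global statement (as opposed to the statement at a single fiber), since the theta-characteristic identification is not canonical; but what we actually need is the intrinsic fiberwise sign $-1$ on $T_{\mathcal{M}/S}$, which is independent of any such choice because translations by elements of the underlying abelian group already trivialise the tangent bundle along each fiber and commute with the intrinsic involution $\iota$ up to translation. Alternatively, one can appeal to the uniqueness part of \cite[Theorem 3.4.1]{BBMP:2020}: $\iota^*\rho_{\rm MW}$ is another candidate symbol for the Mumford--Welters heat operator on the $\iota$-invariant line bundle $L$, so by uniqueness it agrees with $\rho_{\rm MW}$. Either route delivers the lemma, and this is what will let us apply Proposition \ref{isotypical} to conclude that the projective Mumford--Welters connection preserves the $\pm$-eigenspace decomposition of the theta bundles.
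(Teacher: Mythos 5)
Your proof is correct, but it takes a genuinely different route from the paper's. You show that the involution (\ref{serreinv}) acts as the identity on the whole sheaf $\pi_*\operatorname{Sym}^2 T_{\mathcal{M}/S}$ in which the symbol lives: fiberwise it is an inversion of the abelian torsor, hence acts by $-1$ on the canonically trivialised relative tangent bundle and therefore by $(-1)^2=+1$ on its symmetric square, so \emph{every} candidate symbol, and in particular $\rho_{\rm MW}$, is automatically invariant. The paper instead exploits the closed formula $\rho_{\rm MW}=-\mu_L^{-1}\circ\kappa_{\mathcal{M}/S}$: the linearisation of the action on $L$ makes the Atiyah class, hence $\mu_L$, equivariant, and the Kodaira--Spencer map is invariant because the group acts trivially on $S$. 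Your argument is more elementary and yields a stronger conclusion for this particular involution (no information about $L$ or its linearisation is needed), but it is special to fiberwise automorphisms acting trivially on $\operatorname{Sym}^2$ of the invariant vector fields, whereas the paper's argument applies verbatim to any finite group acting fiberwise with a linearisation on $L$. Your fallback argument via the uniqueness clause of \cite[Theorem 3.4.1]{BBMP:2020} is essentially a repackaging of the paper's reasoning and is equally valid; the only point to keep straight there is that it establishes invariance of the symbol, which is the input (not the output) of Proposition \ref{isotypical}, so no circularity arises.
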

\begin{proof}It suffices to remark that, as the action of $\Gamma$ is linearised, the Atiyah class of $L$ is invariant under $\Gamma$, and the Kodaira-Spencer morphism $\kappa_{\mathcal{M}/S}:T_S\rightarrow R^1\pi_{\ast}T_{\mathcal{M}/S}$ is necessarily invariant, as it is equivariant for general reasons, and $\Gamma$ acts trivially on $S$.
\end{proof}
We therefore have
\begin{corollary}\label{preserved}
The bundles $(\pi_{J*}\Theta^k)_+$ and $\left(\pi_{P*}\Xi^k\right)_+$ over $\mathcal{M}_g$ and $\mathcal{R}_g$ respectively, given by sections that are invariant under the involution (\ref{serreinv}) with its canonical linearization, are preserved by the Mumford-Welters connection.
\end{corollary}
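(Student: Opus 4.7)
The plan is to deduce this corollary as a direct application of Proposition \ref{isotypical}, with the involution (\ref{serreinv}) providing the group action and the immediately preceding lemma providing the symbol invariance.

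First, I would verify the hypotheses of Proposition \ref{isotypical} in the two cases at hand. Take $\Gamma=\mathbb{Z}/2\mathbb{Z}$ generated by the involution $\lambda\mapsto \lambda^{-1}\otimes K$. This involution acts fiberwise on $\pi_J:J^*\rightarrow\mathcal{M}_g$ and on $\pi_P:P^{\rm even}\rightarrow\mathcal{R}_g$, since on each fiber it is defined purely in terms of the relative canonical bundle. In the Prym case one should check that the involution indeed preserves $P^{\rm even}$: preservation of the defining condition $\sigma^*L\cong L^\vee\otimes K_{\widetilde{C}}$ is immediate since $\sigma$ commutes with tensoring by $K_{\widetilde{C}}$, and preservation of the parity of $h^0$ follows from Serre duality on $\widetilde{C}$. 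The canonical linearization of $\Gamma$ on $\Theta$ (respectively $\Xi$) extends by tensor powers to $\Theta^k$ (respectively $\Xi^k$), and the preceding lemma provides precisely the remaining ingredient: $\Gamma$-invariance of the symbol $\rho_{\rm MW}$.

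Having verified these hypotheses, I would invoke Proposition \ref{isotypical} to conclude that the projective Mumford-Welters connection preserves the isotypical decomposition
\[ \pi_{J*}\Theta^k = (\pi_{J*}\Theta^k)_+ \oplus (\pi_{J*}\Theta^k)_-,\]
and similarly $\pi_{P*}\Xi^k = (\pi_{P*}\Xi^k)_+ \oplus (\pi_{P*}\Xi^k)_-$, since $\widehat{\mathbb{Z}/2\mathbb{Z}}$ consists exactly of the two characters indexing the $\pm$-eigenspaces. In particular, the $+$-components corresponding to the trivial character are preserved, as claimed.

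There is no substantive obstacle; the work was done in the two preceding results. The only place that warrants a brief explicit check is the fiberwise nature of the $\Gamma$-action on $P^{\rm even}$ together with the existence of the canonical linearization on the relative theta bundles, since these are the inputs on which Proposition \ref{isotypical} relies.
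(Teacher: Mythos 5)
Your proposal is correct and follows the same route as the paper: the corollary is obtained by applying Proposition \ref{isotypical} to the $\mathbb{Z}/2\mathbb{Z}$-action generated by the involution (\ref{serreinv}) on the two families of abelian torsors, with the preceding lemma supplying the invariance of the symbol $\rho_{\rm MW}$. Your additional checks (that the involution preserves $P^{\rm even}$ and that the linearization extends to $\Theta^k$ and $\Xi^k$) are left implicit in the paper but are accurate and harmless.
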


\subsection{Conclusions}
We can now put everything together: Corollaries \ref{combined-con} and \ref{preserved}, together with the main theorem of \cite{oxbury.pauly:1996,pauly.ramanan:2001}, give
\mainresulta

Combining this with the finiteness from Corollary \ref{combined-con}, we finally have
\mainresultb

\def\cprime{$'$}

\end{document}